\documentclass{amsart}

\usepackage{a4wide,latexsym,amssymb,amsthm,amsmath,color}

\theoremstyle{plain}

\newtheorem{theorem}{Theorem}
\newtheorem{lemma}{Lemma}

\newtheorem{corollary}{Corollary}

\theoremstyle{remark}

\newtheorem{remark}{Remark}

\def\R{\mathbb{R}}

\newcommand{\nks}{\ensuremath{\mathbb{S}^3 \times \mathbb{S}^3}} 


\begin{document}

\title[Lagrangian Submanifolds with Constant Angle Functions ]{Lagrangian Submanifolds with Constant Angle Functions of the nearly K{\" a}hler $\mathbb{S}^3\times\mathbb{S}^3$}

\author[B. Bekta\c s]{Burcu Bekta\c s}
\address{Istanbul Technical University, Faculty of Science and Letters, Department of Mathematics, 34469, Maslak, 
Istanbul, Turkey} 
\email{bektasbu@itu.edu.tr}

\author[M. Moruz]{Marilena Moruz}
\address{LAMAV, ISTV2 Universit\'e de Valenciennes Campus du Mont Houy 59313 Valenciennes Cedex 9 France}
\email{marilena.moruz@gmail.com}

\author[J. Van der Veken]{Joeri Van der Veken}
\address{KU Leuven, Department of Mathematics, Celestijnenlaan 200B -- Box 2400, BE-3001 Leuven, Belgium} 
\email{joeri.vanderveken@wis.kuleuven.be}

\author[L. Vrancken]{Luc Vrancken}
\address{LAMAV, ISTV2 \\Universit\'e de Valenciennes\\ Campus du Mont Houy\\ 59313 Valenciennes Cedex 9\\ France and KU  Leuven \\ Department of Mathematics \\Celestijnenlaan 200B -- Box 2400 \\ BE-3001 Leuven \\ Belgium}  
\email{luc.vrancken@univ-valenciennes.fr}

\thanks{This work was partially supported by the project 3E160361 (Lagrangian and calibrated submanifolds) of the KU Leuven research fund and part of it was carried out while the first author visited KU Leuven, supported by The Scientific and Technological Research Council of Turkey (TUBITAK), under grant 1059B141500244.}

\begin{abstract}
We study Lagrangian submanifolds of the nearly K\"ahler $\nks$ with respect to their, so called, angle functions. We show that if all angle functions are constant, then the submanifold is either totally geodesic or has constant sectional curvature and  there is a classification theorem that follows from \cite{dvw}. Moreover, we show that if precisely one angle function is constant, then it must be equal to $0,\frac{\pi}{3}$ or $\frac{2\pi}{3}$. Using then two remarkable constructions together with the classification of Lagrangian submanifolds of which the first component has nowhere maximal rank from \cite{eu}, we obtain a classification of such Lagrangian submanifolds.
\end{abstract}

\keywords{}

\maketitle


\section{Introduction}

The systematic study of nearly K\"ahler manifolds originates with the work of Gray (\cite{9}). In this paper we continue the study of Lagrangian submanifolds of the nearly K\"ahler $\nks$ started in \cite{ss}, \cite{zhd}, \cite{dvw} and \cite{eu}. The homogeneous nearly K\"ahler $\nks$ is one of the four homogeneous nearly K\"ahler manifolds as was shown by Butruille (\cite{b}). Note that only recently the first complete non homogeneous examples were discovered in \cite{Haskins}.\\
A submanifold of  a $6$-dimensional nearly K\"ahler manifold is called Lagrangian if it is $3$-dimensional and the almost complex structure maps the tangent space into the normal space. Such submanifolds are especially interesting (see \cite{Ivanov}, \cite{ss}), as they are necessarily minimal and orientable. The Lagrangian submanifolds of the nearly K\"ahler $\mathbb{S}^6$ were treated in \cite{Dillen-Vrancken-Verstr}, \cite{Dillen-Vrancken}, \cite{Ejiri2}, \cite{Lotay}, \cite{Palmer}, \cite{Vrancken1}, \cite{Vrancken}.  The first examples appeared in \cite{ms} and \cite{ss}. An important tool in the study of Lagrangian submanifolds of the nearly K\"ahler $\nks$ are the so called angle functions $\theta_1,\theta_2,\theta_3$, which were introduced in \cite{dvw} (see also section \ref{sec2} of the present paper). Note that $\theta_1+\theta_2+\theta_3$ must always be an integer multiple of $\pi$. In this paper we show the following theorems.
\begin{theorem}
Let $f:M\rightarrow \nks: x\mapsto f(x)=(p(x),q(x))$ be a Lagrangian immersion. If the angle functions $\theta_1,\theta_2,\theta_3$ are constant, then either $M$ is totally geodesic or $M$ has constant sectional curvature.
\end{theorem}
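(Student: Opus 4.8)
The plan is to work in the adapted orthonormal frame $\{E_1,E_2,E_3\}$ on $M$ supplied by \cite{dvw}, in which the almost product structure $P$ of the ambient $\nks$ takes the diagonal form $PE_i=\cos(2\theta_i)\,E_i+\sin(2\theta_i)\,JE_i$, and to exploit that for a Lagrangian submanifold the cubic form $h_{ijk}:=g(h(E_i,E_j),JE_k)$ is totally symmetric and trace-free (by minimality). The strategy is to convert the hypothesis $d\theta_i=0$ into algebraic constraints linking the Levi--Civita connection coefficients $\omega_{ij}^k=g(\nabla_{E_i}E_j,E_k)$ with the components $h_{ijk}$, and then to run a case analysis on the configuration of the constant angles.

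First I would recall the covariant derivative formulas for $J$ and $P$ along $M$ (the nearly K\"ahler tensor $G=\tilde\nabla J$ and the identities $P^2=\mathrm{Id}$, $PJ=-JP$), together with the Gauss and Weingarten formulas. The central computation is to evaluate $\tilde\nabla_{E_i}(PE_j)$ in two ways: once by applying $\tilde\nabla P$ to $E_j$, and once by differentiating the right-hand side $\cos(2\theta_j)E_j+\sin(2\theta_j)JE_j$. Because each $\theta_j$ is constant, every term containing $E_i(\theta_j)$ drops out; comparing tangential and normal components of the resulting identity then yields a closed system of relations expressing the $\omega_{ij}^k$ and the $h_{ijk}$ in terms of the fixed values $\cos(2\theta_i),\sin(2\theta_i)$.

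Next I would solve this system under a case split according to how many of the angles coincide and, crucially, subject to the constraint $\theta_1+\theta_2+\theta_3\in\pi\mathbb{Z}$ noted in the introduction. In the generic configuration the compatibility relations force all components $h_{ijk}$ to vanish, so $h=0$ and $M$ is totally geodesic. In the remaining special angle configurations the same relations do not kill $h$ but pin it down to a one-parameter family of symmetric trace-free type; feeding this $h$ and the determined connection coefficients into the Gauss equation
\[
g(R(E_i,E_j)E_j,E_i)=g(\tilde R(E_i,E_j)E_j,E_i)+\sum_k\big(h_{iik}h_{jjk}-h_{ijk}^2\big),
\]
where $\tilde R$ is the explicitly known curvature of $\nks$ evaluated in the adapted frame, I would check that all sectional curvatures come out equal, giving constant sectional curvature and hence the stated dichotomy.

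The hard part will be the case analysis in the previous paragraph: when two or three angle functions coincide the diagonalizing frame is no longer unique, so the compatibility relations must be read carefully on the eigenspaces, and one has to verify that the constraint $\sum_i\theta_i\in\pi\mathbb{Z}$ excludes all intermediate configurations in which $h$ would be neither zero nor of constant-curvature type. Managing these degeneracies, and confirming that the surviving second fundamental form is exactly the one producing a space form, is where the essential work lies.
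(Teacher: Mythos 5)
Your setup coincides with the paper's: in the adapted frame the constancy of the angles, fed through the two-way computation of $\tilde\nabla_{E_i}(PE_j)$ (equivalently the formulas for $\nabla A$ and $\nabla B$ from \cite{dvw}), kills every component $h_{jj}^i$ of the cubic form and expresses the connection coefficients $\omega_{12}^3,\omega_{23}^1,\omega_{31}^2$ in terms of the single surviving component $h_{12}^3$ and the angle differences. The gap is in what you claim this buys you afterwards. Those relations leave $h_{12}^3$ completely undetermined: they are identities that can be solved for the $\omega$'s whatever $h_{12}^3$ is, so they do not ``force all components $h_{ijk}$ to vanish'' in any configuration, generic or otherwise. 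The mechanism that actually produces the dichotomy is the Codazzi equation, which your plan never invokes. Evaluated on the frame it first gives $E_i(h_{12}^3)=0$, so $h_{12}^3$ is constant --- without this even your final Gauss-equation step is not algebraic, since the curvature of $M$ contains derivatives of the $\omega$'s and hence of $h_{12}^3$ --- and then, after substituting the $\omega$'s, it yields three equations quadratic in $h_{12}^3$. Viewing these as a linear system in $\bigl(2(h_{12}^3)^2,\,h_{12}^3,\,1\bigr)$, a nonzero $h_{12}^3$ forces the determinant to vanish, i.e.
\[
\sin(\theta_1+\theta_2-2\theta_3)\sin(\theta_2+\theta_3-2\theta_1)\sin(\theta_1+\theta_3-2\theta_2)=0 .
\]

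Your case split is also organized around the wrong degeneracy. Coinciding angles (mod $\pi$) are dispatched immediately by the known lemma that two equal angles force total geodesy, so the non-uniqueness of the diagonalizing frame is not where the work lies. The special configurations that survive are those with $\theta_i+\theta_j-2\theta_k\equiv 0\pmod{\pi}$ together with $\theta_1+\theta_2+\theta_3\equiv 0\pmod{\pi}$; using the symmetries $\theta_i\mapsto\pi-\theta_i$ and $\theta_i\mapsto\tfrac{2\pi}{3}-\theta_i$ coming from the constructions $\tilde f$ and $f^*$, these reduce to $\theta_2=\tfrac{\pi}{3}$, $\theta_1+\theta_3=\tfrac{2\pi}{3}$, after which the Codazzi and Gauss equations pin down $h_{12}^3$ and the remaining angle to a few discrete values (not a one-parameter family), each giving a flat or constant-curvature metric. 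So the skeleton of your plan is right, but as written it omits the one equation that does the forcing.
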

A complete classification, as stated in Corollary \ref{cc}, follows immediately by applying the main results of \cite{zhd} and \cite{dvw}. Note that as for an equivariant Lagrangian immersion the angle functions are constant, this yields at the same time a classification of all the equivariant Lagrangian submanifolds. Next, we also deal with the case that precisely one of the angle functions, which we may assume to be $\theta_1$, is constant. In that case, we obtain:
\begin{theorem}
Let $f:M\rightarrow \nks: x\mapsto f(x)=(p(x),q(x))$ be a Lagrangian immersion, which is neither totally geodesic nor of constant sectional curvature. Assume that $\theta_1$ is constant. Then up to a multiple of $\pi$, we have that either $\theta_1=0,\frac{\pi}{3},\frac{2\pi}{3}$.
\end{theorem}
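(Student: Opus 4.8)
The plan is to exploit the structural equations that govern Lagrangian submanifolds of $\nks$, in particular the constraints coming from the nearly Kähler structure on the angle functions $\theta_1,\theta_2,\theta_3$. Since we assume $\theta_1$ is constant but $M$ is neither totally geodesic nor of constant sectional curvature, by Theorem~1 at least one of the remaining angle functions must be non-constant. First I would set up an adapted orthonormal frame $\{E_1,E_2,E_3\}$ diagonalizing the relevant endomorphism whose eigenvalue-type data are encoded by the $\theta_i$, and write down the Gauss, Codazzi and the structure equations relating the connection coefficients to the derivatives of the angle functions. The key identity to keep at hand is $\theta_1+\theta_2+\theta_3 \in \pi\mathbb{Z}$, which immediately yields $E_j(\theta_2)=-E_j(\theta_3)$ for every $j$ once $\theta_1$ is constant, so that the two remaining angle functions vary in lockstep.

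Next I would differentiate the compatibility (integrability) equations in the direction of the frame vectors. The constancy of $\theta_1$ forces $E_j(\theta_1)=0$ for all $j$, and plugging this into the evolution equations for the angle functions should produce algebraic relations among $\cos\theta_1,\cos\theta_2,\cos\theta_3$ (equivalently among the expressions $\sin(\theta_i-\theta_j)$ that typically appear in these structure equations for $\nks$). The heart of the argument is then to extract a purely trigonometric equation satisfied by the \emph{constant} $\theta_1$ alone, by eliminating the non-constant quantities $\theta_2,\theta_3$ using the sum condition together with the differentiated equations. Concretely, I expect a relation of the form $\sin\theta_1\,\bigl(\text{something involving }\cos\theta_1\bigr)=0$ to drop out, whose solutions are exactly $\theta_1 \in \{0,\tfrac{\pi}{3},\tfrac{2\pi}{3}\}$ modulo $\pi$; the values $\tfrac{\pi}{3},\tfrac{2\pi}{3}$ are precisely the ones forced by $\cos\theta_1 = \pm\tfrac12$, which is the signature of the nearly Kähler (rather than Kähler) geometry.

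The main obstacle I anticipate is the elimination step: showing that all non-constant contributions genuinely cancel and that no additional solutions for $\theta_1$ survive. Because $\theta_2$ and $\theta_3$ are functions, one must be careful to argue on the open dense set where they are actually non-constant, and to rule out the degenerate cases where the putative trigonometric factor vanishes identically for a reason unrelated to the claimed values (which would then have to be absorbed into the totally geodesic or constant-curvature cases already handled by Theorem~1). A secondary technical point is that the adapted frame may fail to be globally smooth where eigenvalues coincide, i.e.\ where two of the angle functions agree; I would handle this by working locally away from such loci and invoking continuity, or by treating the coincidence loci separately and checking that they are consistent with the stated values of $\theta_1$.
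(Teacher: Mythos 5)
Your outline follows essentially the same route as the paper: parametrize the angles so that $\theta_1=c$ is constant and $\theta_2,\theta_3$ move in lockstep via a single non-constant function $\Lambda$ (your observation $E_j(\theta_2)=-E_j(\theta_3)$ is exactly this), feed the constancy of $\theta_1$ into the structure equations \eqref{opA}, \eqref{opB} and Codazzi, and extract a factored relation whose first factor is $\sin(3\theta_1)$ (equivalently your $\sin\theta_1\,(4\cos^2\theta_1-1)=0$, giving $0,\tfrac{\pi}{3},\tfrac{2\pi}{3}$ mod $\pi$). So the strategy is right and correctly anticipated.

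However, there is a genuine gap: the step you flag as ``the main obstacle'' is in fact the bulk of the proof, and you do not resolve it. The relation that drops out of Codazzi is of the form $\sin(3c)\cdot F=0$ with
\[
F=\cos(4\Lambda)-2\cos(2\Lambda)\cos(3c)+12(h_{12}^3)^2+12(h_{13}^3)^2+1,
\]
and $F$ is \emph{not} a purely trigonometric expression in the constant $\theta_1$: it involves the non-constant $\Lambda$ and components of the second fundamental form, so it cannot be dismissed by a sign or positivity argument alone ($F$ can a priori vanish on an open set). Excluding the branch $F\equiv 0$ requires differentiating $F=0$ along $E_1$, splitting on whether $h_{13}^3$ vanishes, deriving $\cos(3c)=0$ and a positivity contradiction in one subcase, and in the other subcase computing the sectional curvature twice (once intrinsically, once via the Gauss equation) to solve for $h_{12}^3$ and force $\Lambda$ to be constant --- a contradiction. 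None of this elimination is carried out or even sketched in your proposal, so as written it establishes only that \emph{if} the non-constant contributions cancel as hoped, then $\theta_1\in\{0,\tfrac{\pi}{3},\tfrac{2\pi}{3}\}$; it does not prove the theorem.
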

Note that the case $\theta_1=\frac{\pi}{3}$ corresponds precisely with those immersions for which the first component has nowhere maximal rank, see \cite{eu}, Theorem 1. Such immersions are related to minimal surfaces in $\mathbb{S}^3$ and a complete classification is obtained in \cite{eu}, see also Theorems 5, 6, 7 and 8 in Section \ref{results} of the present paper. Similarly, the case that $\theta_1=\frac{2\pi}{3}$ corresponds to the case that the second component has nowhere maximal rank. In order to obtain a classification in these last two cases and as a tool in several of the other theorems, we introduce two constructions denoted by  $\,\tilde{}$ and ${}^*$, which allows us to construct from a Lagrangian immersion $f:M\rightarrow\nks$, two new Lagrangian immersions $\tilde{f}:M\rightarrow\nks$ and $f^*:M\rightarrow\nks$, which all induce the same metric on $M$ (see Section \ref{results}).

\section{Preliminaries} \label{sec2}

A nearly K{\"a}hler manifold is an almost Hermitian manifold with almost complex structure $J$ such that 
$\tilde\nabla J$ is skew symmetric, i.e.,
\begin{equation}
(\tilde\nabla_XJ)Y+(\tilde\nabla_YJ)X=0
\end{equation} 
for all tangent vector $X,Y$ and for $\tilde\nabla$ the Levi-Civita connection on the manifold.\\ 
A submanifold of an almost Hermitian manifold is called Lagrangian if the almost complex structure $J$
interchanges the tangent and the normal spaces and if the dimension of the submanifold is half the dimension of the ambient manifold.
 
\subsection{The Nearly K{\"a}hler structure of $\mathbb{S}^3\times\mathbb{S}^3$}
In this section, we recall the homogeneous nearly K{\"a}hler structure of $\mathbb{S}^3\times\mathbb{S}^3$ and mention 
some  known results from \cite{dvw} and \cite{zhd}.\\
First, we can identify the $3$-sphere $\mathbb{S}^3$ with the set of all the unit quaternions in $\mathbb{H}$, i.e., 
$$
\mathbb{S}^3=\{p\in\mathbb{H}\;|\;\langle p,p\rangle=1\},
$$ 
where the metric $\langle \cdot , \cdot\rangle$ is induced from the metric on $\mathbb{R}^4$. 
Let $ i, j,k$ denote the standard imaginary unit quaternions. Then the vector fields $X_1,X_2,X_3$  given by
\begin{equation*}
\begin{split}
 X_1(p)&=p\,i=(-x_2,x_1,x_4,-x_3),\\
 X_2(p)&=p\, j=(-x_3,-x_4,x_1,x_2),\\
 X_3(p)&=-p\,k=(x_4,-x_3,x_2,-x_1),
 \end{split}
\end{equation*}
where $p=x_1+x_2\,i+x_3\,j+x_4\,k\in
\mathbb{S}^3$, form a basis of the tangent bundle $T\mathbb{S}^3$. 
Hence, the tangent space of $\mathbb{S}^3$ is defined by 
$T_p\mathbb{S}^3=\{p\alpha\;|\; \alpha\in\mbox{Im}{\mathbb{H}}\}$. 

Let $Z_{(p,q)}$ be a tangent vector of $\mathbb{S}^3\times\mathbb{S}^3$ at $(p,q)$. From the known natural
identification $T_{(p,q)}(\mathbb{S}^3\times\mathbb{S}^3)\cong T_p\mathbb{S}^3\oplus T_q\mathbb{S}^3$, 
we write $Z_{(p,q)}=(U_{(p,q)}, V_{(p,q)})$ or simply $Z=(U,V)$.
Now, we define the vector fields on $\mathbb{S}^3\times\mathbb{S}^3$ as
\begin{align*}
 \tilde E_1(p,q) &= (p\,i,0),   &   \tilde F_1(p,q) &= (0,q\,i),\\
 \tilde E_2(p,q) &= (p\,j,0),   & \tilde F_2(p,q) &= (0,q\,j),\\
 \tilde E_3(p,q) &= -(p\,k,0),  &   \tilde F_3(p,q) &= -(0,q\,k),
\end{align*}
which are mutually orthonogal with respect to the usual Euclidean product metric on $\mathbb{S}^3\times\mathbb{S}^3$.
The Lie brackets are $[\tilde E_i,\tilde E_j]=-2\varepsilon_{ijk}\tilde E_k$, $[\tilde F_i, \tilde F_j]=-2\varepsilon_{ijk}\tilde F_k$ and $[\tilde E_i,\tilde F_j]=0$,
where 
$$\varepsilon_{ijk}=\left\{
\begin{aligned}
& 1, \qquad\,  {\rm if}\ (ijk)\  {\rm is\ an\ even\ permutation\ of\ (123)},\\
&-1,\quad  {\rm if}\ (ijk)\ {\rm is\ an\ odd\ permutation\ of\ (123)},\\
& 0, \ \qquad {\rm if\ otherwise.}
\end{aligned}
\right.
$$
The almost complex structure $J$ on $\mathbb{S}^3\times\mathbb{S}^3$ is defined by
\begin{equation}
\label{nkalcomp.}
J(U,V)_{(p,q)}=\frac{1}{\sqrt{3}}\left(2pq^{-1}V-U, -2qp^{-1}U+V\right),
\end{equation}
for $(U,V)\in T_{(p,q)}(\mathbb{S}^3\times\mathbb{S}^3)$ (see \cite{b}).
The nearly K\"ahler metric on $\mathbb{S}^3\times\mathbb{S}^3$ with which we choose to work is the Hermitian metric associated to the usual Euclidean product
metric on $\mathbb{S}^3\times\mathbb{S}^3$:
\begin{align}
\label{kahlermetric}
\begin{split}
   g(Z,Z') &= \frac{1}{2} \left(\langle Z,Z'\rangle + \langle JZ,JZ'\rangle\right)\\
          &= \frac{4}{3} \left(\langle U,U'\rangle +  \langle V,V'\rangle\right)
             -\frac{2}{3} \left(\langle p^{-1}U,q^{-1}V'\rangle +  \langle p^{-1}U',q^{-1}V\rangle\right),
\end{split}
\end{align}
where $Z=(U,V)$ and $Z'=(U',V')$. In the first line $\langle\cdot,\cdot\rangle$ stands for the usual Euclidean product metric on
$\mathbb{S}^3\times \mathbb{S}^3$ and in the second line $\langle \cdot,\cdot\rangle$ stands for the usual Euclidean metric on 
$\mathbb{S}^3$. From the definition, it can be seen that the almost complex structure $J$ is compatible with the metric $g$.

\begin{lemma}
(\cite{bddv}
\label{lem:levicivita}
The Levi-Civita connection $\tilde \nabla$ on $\mathbb{S}^3\times\mathbb{S}^3$ with respect to the metric $g$ is given by
\begin{align*}
\tilde\nabla_{\tilde E_i}\tilde E_j &= -\varepsilon_{ijk}\tilde E_k, &\tilde\nabla_{\tilde E_i}\tilde F_j &= \frac{\varepsilon_{ijk}}{3}(\tilde E_k -\tilde F_k),\\
\tilde\nabla_{\tilde F_i}\tilde E_j &= \frac{\varepsilon_{ijk}}{3} (\tilde F_k -\tilde E_k), & \tilde\nabla_{\tilde F_i}\tilde F_j &= -\varepsilon_{ijk} \tilde F_k.
\end{align*}
\end{lemma}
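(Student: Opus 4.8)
The plan is to compute the Levi--Civita connection via the Koszul formula, after first exploiting a feature of the frame $\{\tilde E_1,\tilde E_2,\tilde E_3,\tilde F_1,\tilde F_2,\tilde F_3\}$ that makes the whole computation algebraic. Writing $\tilde E_i=(p\alpha_i,0)$ and $\tilde F_i=(0,q\alpha_i)$ with $\alpha_1=i$, $\alpha_2=j$, $\alpha_3=-k$, I would substitute these directly into the metric formula \eqref{kahlermetric}. Since left multiplication by a unit quaternion is a Euclidean isometry and $p^{-1}(p\alpha_i)=\alpha_i$, $q^{-1}(q\alpha_i)=\alpha_i$, the off-diagonal correction term only couples the $E$- and $F$-blocks and yields
\begin{equation*}
g(\tilde E_i,\tilde E_j)=g(\tilde F_i,\tilde F_j)=\tfrac{4}{3}\delta_{ij},\qquad g(\tilde E_i,\tilde F_j)=-\tfrac{2}{3}\delta_{ij}.
\end{equation*}
The crucial observation, which is the heart of the argument, is that these coefficients are \emph{constant} functions on $\nks$.

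Because the metric coefficients in this frame are constant, every derivative term $Xg(Y,Z)$ in the Koszul formula vanishes whenever $X,Y,Z$ are taken from the frame. Hence for such frame fields the Koszul formula collapses to the purely algebraic identity
\begin{equation*}
2g(\tilde\nabla_XY,Z)=g([X,Y],Z)-g([X,Z],Y)-g([Y,Z],X).
\end{equation*}
The next step is then to feed in the given Lie brackets $[\tilde E_i,\tilde E_j]=-2\varepsilon_{ijk}\tilde E_k$, $[\tilde F_i,\tilde F_j]=-2\varepsilon_{ijk}\tilde F_k$ and $[\tilde E_i,\tilde F_j]=0$, contracting the left-hand side against each of the six frame fields $Z$ in turn, which reduces everything to bookkeeping with the symbol $\varepsilon_{ijk}$.

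The last step is to reconstruct $\tilde\nabla_XY$ from the six inner products $g(\tilde\nabla_XY,Z)$. As the constant Gram matrix is the invertible block matrix with diagonal blocks $\tfrac43 I_3$ and off-diagonal blocks $-\tfrac23 I_3$, nondegeneracy of $g$ pins down $\tilde\nabla_XY$; in practice it is cleanest to verify that each expression in the statement has the asserted inner products. For example, one finds $g(\tilde\nabla_{\tilde E_i}\tilde E_j,\tilde E_l)=-\tfrac43\varepsilon_{ijl}$ and $g(\tilde\nabla_{\tilde E_i}\tilde E_j,\tilde F_l)=\tfrac23\varepsilon_{ijl}$, which match $-\varepsilon_{ijk}\tilde E_k$ exactly. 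For the mixed coefficient $\tilde\nabla_{\tilde E_i}\tilde F_j$ the vanishing of $[\tilde E_i,\tilde F_j]$ kills the first Koszul term, and the two surviving terms produce the combination $\tfrac{\varepsilon_{ijk}}{3}(\tilde E_k-\tilde F_k)$; the case $\tilde\nabla_{\tilde F_i}\tilde E_j$ is symmetric.

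I do not anticipate a genuine obstacle here. The only real subtlety is that the frame is \emph{not} orthonormal for $g$, so one cannot read off the connection coefficients from inner products naively; this is handled once and for all by carrying the constant Gram matrix through the Koszul formula. Beyond that, the main source of error is purely clerical, namely keeping the signs of $\varepsilon_{ijk}$ correct under the index permutations $(ijl)\mapsto(ilj)\mapsto(jli)$, so I would organize the computation to use the total antisymmetry and cyclic invariance of $\varepsilon_{ijk}$ systematically.
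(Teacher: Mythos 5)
Your proposal is correct. Note that the paper itself offers no proof of this lemma; it is recalled verbatim from the cited reference \cite{bddv}, so there is nothing to compare against beyond the standard argument, which is exactly what you give. Your computation checks out: the Gram matrix $g(\tilde E_i,\tilde E_j)=g(\tilde F_i,\tilde F_j)=\tfrac43\delta_{ij}$, $g(\tilde E_i,\tilde F_j)=-\tfrac23\delta_{ij}$ is indeed constant, the reduced Koszul formula then gives $g(\tilde\nabla_{\tilde E_i}\tilde E_j,\tilde E_l)=-\tfrac43\varepsilon_{ijl}$, $g(\tilde\nabla_{\tilde E_i}\tilde E_j,\tilde F_l)=\tfrac23\varepsilon_{ijl}$ and $g(\tilde\nabla_{\tilde E_i}\tilde F_j,\tilde E_l)=-g(\tilde\nabla_{\tilde E_i}\tilde F_j,\tilde F_l)=\tfrac23\varepsilon_{ijl}$, which match the stated expressions against the (invertible) Gram matrix; and you correctly flag the only real trap, namely that the frame is not $g$-orthonormal, so the coefficients cannot be read off naively. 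As a quick sanity check, the resulting formulas are torsion-free with respect to the given brackets, e.g.\ $\tilde\nabla_{\tilde E_i}\tilde F_j-\tilde\nabla_{\tilde F_j}\tilde E_i=0=[\tilde E_i,\tilde F_j]$.
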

Then, we obtain the following:
\begin{equation}
\label{eq:G}
\begin{split}
(\tilde\nabla_{\tilde E_i} J)\tilde E_j &= -\frac{2}{3\sqrt{3}}\varepsilon_{ijk}(\tilde E_k + 2\tilde F_k), \quad
\;(\tilde\nabla_{\tilde E_i} J)\tilde F_j = -\frac{2}{3\sqrt{3}}\varepsilon_{ijk} (\tilde E_k - \tilde F_k), \\
(\tilde\nabla_{\tilde F_i} J)\tilde E_j &= -\frac{2}{3\sqrt{3}}\varepsilon_{ijk}(\tilde E_k -\tilde F_k), \quad \;
(\tilde\nabla_{\tilde F_i} J)\tilde F_j =  -\frac{2}{3\sqrt{3}}\varepsilon_{ijk} (2\tilde E_k +\tilde F_k).
\end{split}
\end{equation}
Let $G:=\tilde\nabla J$. Then $G$ is skew-symmetric and it satisfies
\begin{equation}
G(X,JY)=-JG(X,Y), \quad g(G(X,Y),Z)+g(G(X,Z),Y)=0,
\end{equation}
for any vectors fields $X,Y,Z$ tangent to $\mathbb{S}^3\times\mathbb{S}^3$.
Therefore, $\mathbb{S}^3\times\mathbb{S}^3$ equipped with $g$ and the almost complex structure $J$, becomes a nearly K\"ahler manifold.\\
Moreover, we introduce the almost product structure $P$, defined in \cite{bddv} as
\begin{equation}
\label{nkalprod.}
P(U,V)_{(p,q)}=(pq^{-1}V,qp^{-1}U)
\end{equation}
for $(U,V)\in T_{(p,q)}(\mathbb{S}^3\times\mathbb{S}^3)$.
It satisfies the following properties:
\begin{align*}
P^2&=Id,\; \text{i.e. $P$ is involutive,}\\
PJ&=-JP,\;\text{ i.e. $P$ and $J$ anti-commute},\\ 
g(PZ,PZ')&=g(Z,Z'), \ \text{i.e. $P$ is compatible with $g$,}\\
g(PZ,Z')&=g(Z,PZ'),\ \text{ i.e. $P$ is symmetric.} 
\end{align*}

\subsection{Lagrangian Submanifolds of the nearly K{\"a}hler $\mathbb{S}^3\times\mathbb{S}^3$.}
Assume that $M$ is a Lagrangian submanifold in the nearly K{\"a}hler $\mathbb{S}^3 \times \mathbb{S}^3$. 
Since $M$ is Lagrangian, the pull-back of $T(\mathbb{S}^3 \times \mathbb{S}^3)$ to $M$ splits into $TM\oplus JTM$. 
Hence, there are two endomorphisms $A,B\colon TM\to TM$ such that the restriction $P|_{TM}$ of $P$ to
the submanifold equals $A+ JB$, that is $PX = AX + JBX$ for all $X\in TM$. 
This formula together with the fact that $P$ and $J$ anticommute determine $P$ on the normal space
by $PJX = -JPX = BX- JAX$. The endomorphisms $A$ and $B$ have the following properties given in \cite{dvw}:
\begin{itemize}
\item  $A$ and $B$ are symmetric operators which satisfy $A^2+B^2=\mbox{Id}$;
\item The covariant derivatives of the endomorphisms $A$ and $B$ are 
\begin{align}
 (\nabla_X A)Y = B S_{JX} Y - Jh(X,BY) +\frac{1}{2}\left(JG(X,AY)-AJG(X,Y)\right),\label{opA}\\
 (\nabla_X B)Y =  Jh(X,AY) -A S_{JX} Y +\frac{1}{2}\left(JG(X,BY)-BJG(X,Y)\right),\label{opB}
\end{align}
where $\nabla$ is the induced connection on $M$, $h$ is the second fundamental form and $S$ is the shape operator of the Lagrangian immersion;
\item The Lie brackets of $A$ and $B$ vanish, that is, $A$ and $B$ can be diagonalized  simultaneously at a point $p$ of $M$.
\end{itemize}
Therefore, at each point $p\in M$ there is an orthonormal basis $e_1$, $e_2$, $e_3\in T_p M$ such that
\begin{equation}\label{P}
 Pe_i = \cos (2\theta_i) e_i + \sin (2\theta_i) Je_i,~\forall~i=1,2,3.
\end{equation}
Now we extend the orthonormal basis $e_1$, $e_2$, $e_3$ at a point $p$ to a frame on a
neighborhood of $p$ in the Lagrangian submanifold. By Lemma~1.1-1.2 in \cite{szabo} the orthonormal basis at a point
can be extended to a differentiable frame $E_1,E_2, E_3$ on an open dense neighborhood
where the multiplicities of the eigenvalues of $A$ and $B$ are constant.
Taking also into account the properties of $G$ we know that there exists a local
orthonormal frame $\{E_1,E_2,E_3\}$ on an open dense subset of $M$
such that
\begin{equation}
\label{diagop}
AE_i=\cos (2\theta_i) E_i, \quad BE_i=\sin (2\theta_i) E_i,\quad  JG(E_i,E_j)=\frac{1}{\sqrt{3}}\varepsilon_{ijk}E_k.
\end{equation}
Notice that, in a general sense, for an immersion  $f:M\rightarrow \nks$ there exist $A, B:TM\rightarrow TM$ with  eigenvectors $E_i$ and corresponding angle functions $\theta_i$ such that, on the image of $M$ we may write by \eqref{diagop} and \eqref{P}:
\begin{equation}\label{Ponimage}
Pdf(E_i)=df(AE_i)+Jdf(BE_i) \quad \Leftrightarrow \quad Pdf(E_i)=\cos(2\theta_i) df(E_i)+\sin(2\theta_i)Jdf(E_i),
\end{equation}for $i=1,2,3.$\\
The equations of Gauss and Codazzi, respectively, state that
\begin{equation}
\label{Gauss2}
 \begin{split}
   R(X,Y)Z &= \frac{5}{12}\left(g(Y,Z)X - g(X,Z)Y\right) \\
                 &\quad + \frac{1}{3}\left(g(AY,Z)AX - g(AX,Z)AY  +  g(BY,Z)BX - g(BX,Z)BY\right)\\
                 &\quad +[S_{JX},S_{JY}] Z
 \end{split}
\end{equation}
and
\begin{equation}
\label{codazzi}
\begin{split}
\nabla h(X,Y,Z)-&\nabla h (Y,X,Z)=\\
&\frac{1}{3}\left(g(AY,Z)JBX - g(AX,Z)JBY  -  g(BY,Z)JAX + g(BX,Z)JAY\right).
\end{split}
\end{equation}
The functions $\omega_{ij}^k$ are defined by $$\nabla_{E_i}E_j=\omega_{ij}^kE_k$$ and satisfy $\omega_{ij}^k=-\omega_{ik}^j,$ where $\nabla$ is the Levi-Civita connection on $M$. Here we use the Einstein summation convention. 
We denote by $$h_{ij}^k=g(h(E_i, E_j), JE_k)$$ the components of the cubic form $g(h(\cdot,\cdot),J\cdot)$ and we see that $h_{ij}^k$ are totally symmetric on the Lagrangian submanifold.
We recall the following lemmas.
\begin{lemma}
\cite{dvw}
\label{lem:sumzero}
The sum of the angles $\theta_1 + \theta_2  + \theta_3$ is zero modulo $\pi$.
\end{lemma}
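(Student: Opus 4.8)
The plan is to recast the statement as a determinant identity and then extract the phase from the explicit quaternionic description of $P$ and $J$.

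First I would introduce the complex operator $A+iB$ on $T_pM$. By \eqref{P} and \eqref{diagop} it is diagonalised by the frame $E_1,E_2,E_3$ with eigenvalues $\cos(2\theta_j)+i\sin(2\theta_j)=e^{2i\theta_j}$, so that $\det(A+iB)=e^{2i(\theta_1+\theta_2+\theta_3)}$. Since $A$ and $B$ are symmetric and satisfy $A^2+B^2=\mathrm{Id}$, each eigenvalue is unimodular, hence $|\det(A+iB)|=1$; the content of the lemma is therefore exactly that the phase vanishes, i.e. $\det(A+iB)=1$. Note that the relations $P^2=\mathrm{Id}$, $PJ=-JP$ and $A^2+B^2=\mathrm{Id}$ are all phase-blind, so the proof must genuinely use how $P$ sits inside the nearly K\"ahler structure.

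To produce the phase I would pass to the quaternionic model. Write $df(E_i)=(p\beta_i,q\gamma_i)$ with $\beta_i,\gamma_i\in\mathrm{Im}\,\mathbb{H}$. Feeding this into \eqref{Ponimage}, i.e. $Pdf(E_i)=\cos(2\theta_i)\,df(E_i)+\sin(2\theta_i)\,Jdf(E_i)$, and using the explicit expressions \eqref{nkalprod.} for $P$ and \eqref{nkalcomp.} for $J$, a comparison of the two $\mathbb{S}^3$-components yields a relation of the form $\gamma_i=\lambda_i\,\beta_i$ for an explicit real scalar $\lambda_i=\lambda_i(\theta_i)$ (the first and second components giving $\lambda_i$ and $1/\lambda_i$ respectively, which I would check are consistent). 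Imposing that $\{E_i\}$ is orthonormal for the metric \eqref{kahlermetric} then forces $\{\beta_1,\beta_2,\beta_3\}$ to be an orthogonal basis of $\mathrm{Im}\,\mathbb{H}$ with prescribed norms.

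The decisive step is to use the third equation in \eqref{diagop}, $JG(E_i,E_j)=\tfrac{1}{\sqrt3}\varepsilon_{ijk}E_k$, together with the explicit tensor $G=\tilde\nabla J$ from \eqref{eq:G}. This normalises the nearly K\"ahler torsion to the standard cross product on $\mathrm{Im}\,\mathbb{H}\cong\R^3$ and thereby rigidly ties the cyclic ordering and orientation of the triple $\{\beta_i\}$ on the first factor to that of $\{\gamma_i=\lambda_i\beta_i\}$ on the second factor. Expressing $\beta_i\times\beta_j$ in terms of $\beta_k$ and comparing with the corresponding relation on the $q$-side produces an identity among the $\lambda_i$ (equivalently among the $\theta_i$) that is precisely $\det(A+iB)=1$, that is $\theta_1+\theta_2+\theta_3\equiv 0\pmod\pi$. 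Conceptually this is the statement that the induced $SU(3)$ complex volume form of the nearly K\"ahler structure restricts to $M$ with phase in $\pi\mathbb{Z}$, so that $M$ is calibrated by its real part. The main obstacle is exactly this last step: the modulus $|\det(A+iB)|=1$ is essentially free, whereas the vanishing of the phase requires correctly bookkeeping the orientation relation coming from $G$ (and handling, by continuity on an open dense set, the loci where $\lambda_i$ or $1/\lambda_i$ degenerates). I expect the cross-product comparison in \eqref{diagop} to be where all the genuine information is located, everything before it only rediscovering consequences of the definitions.
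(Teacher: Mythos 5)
The paper does not actually prove this lemma; it is imported verbatim from \cite{dvw}, so there is no in‑paper argument to compare you against, and I judge your plan on its own merits.

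Your strategy is sound and, if executed, does prove the statement. The reformulation as $\det(A+iB)=1$, and the observation that $P^2=\mathrm{Id}$, $PJ=-JP$, $A^2+B^2=\mathrm{Id}$ only control the modulus and are ``phase-blind'', are both correct and well put. The first reduction also works as you predict: writing $df(E_i)=(p\alpha_i,q\beta_i)$ and comparing the two components of \eqref{Ponimage} gives $\beta_i=\lambda_i\alpha_i$ with $\lambda_i=\cos(\pi/6-\theta_i)/\cos(\pi/6+\theta_i)$, and the second component indeed returns the reciprocal factor. The problem is that your write-up stops exactly at the step that carries all of the content: the cross-product comparison forced by $JG(E_i,E_j)=\tfrac{1}{\sqrt3}\varepsilon_{ijk}E_k$ is announced (``I expect\dots''), not derived. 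Since everything before that point is, as you yourself note, a rediscovery of the definitions, what you have is a correct plan rather than a proof.

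For the record, the deferred step does close. Expanding $G$ on vectors of the form $(p\alpha,q\beta)$ in terms of the cross product on $\mathrm{Im}\,\H$ and substituting $\beta_i=\lambda_i\alpha_i$, the condition $JG(E_i,E_j)=\tfrac{1}{\sqrt3}\varepsilon_{ijk}E_k$ forces, upon taking the ratio of the two $\mathbb{S}^3$-components (which eliminates all norms and orientation constants),
\begin{equation*}
\frac{1+\lambda_i+\lambda_j-2\lambda_i\lambda_j}{\,2-\lambda_i-\lambda_j-\lambda_i\lambda_j\,}=\frac{2\lambda_k-1}{\lambda_k-2},
\end{equation*}
and with the above expression for $\lambda$ the left side equals $-\sin\bigl(\tfrac{\pi}{6}-(\theta_i+\theta_j)\bigr)/\sin\bigl(\tfrac{\pi}{6}+\theta_i+\theta_j\bigr)$ while the right side equals $-\sin\bigl(\tfrac{\pi}{6}+\theta_k\bigr)/\sin\bigl(\tfrac{\pi}{6}-\theta_k\bigr)$, so the identity is equivalent to $\sin(\theta_1+\theta_2+\theta_3)=0$, as required. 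Three points need care if you carry this out. First, the Euclidean orthogonality of $\{\alpha_1,\alpha_2,\alpha_3\}$ does not follow from orthonormality of $\{E_i\}$ alone, as you suggest; it follows from the Lagrangian condition $\langle\alpha_i,\beta_j\rangle=\langle\beta_i,\alpha_j\rangle$ together with $\beta_i=\lambda_i\alpha_i$ and $\lambda_i\neq\lambda_j$, i.e.\ $\theta_i\not\equiv\theta_j\pmod{\pi}$; the excluded situation is precisely the totally geodesic case of Lemma \ref{lem:equalangles} and must be treated separately or by a limiting argument, as must the points where $\theta_i\equiv\pi/3$ and $\alpha_i=0$ (you flag both, which is good). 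Second, the full vector equation, not only the ratio, must be checked consistent, but the remaining content only constrains the norms and orientation of the $\alpha_i$, which are fixed by the orthonormality of $\{E_i\}$ and the orientation choice implicit in \eqref{diagop}. Third, before relying on \eqref{eq:G} verbatim, test your resulting cross-product formula for $G$ on the totally geodesic diagonal $u\mapsto(u,u)$ (all $\lambda_i=1$): the identity above only closes when the $(\tilde\nabla_{\tilde F_i}J)\tilde F_j$ term carries the sign that passes this test, so verify that sign independently.
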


\begin{lemma}
\cite{dvw} 
\label{lem:sff}
The derivatives of the angles $\theta_i$ give the components of the second fundamental form
\begin{equation}
\label{derv1}
E_i(\theta_j) = -h_{jj}^i,
\end{equation}
except~$h_{12}^3$. The second fundamental form and covariant derivative are related by
\begin{equation}
\label{derv2}
h_{ij}^k \cos(\theta_j-\theta_k) = (\frac{\sqrt{3}}{6}\varepsilon_{ij}^k-\omega_{ij}^k )\sin(\theta_j-\theta_k).
\end{equation}
\end{lemma}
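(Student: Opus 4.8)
The plan is to evaluate the structural identities \eqref{opA} and \eqref{opB} on pairs of frame vectors $E_i,E_j$ and to compare coefficients in the basis $\{E_1,E_2,E_3\}$. First I would compute the left-hand sides by the Leibniz rule together with \eqref{diagop}. Writing $\nabla_{E_i}E_j=\omega_{ij}^kE_k$ and using $AE_j=\cos(2\theta_j)E_j$ and $\omega_{ij}^j=0$ (which follows from $\omega_{ij}^k=-\omega_{ik}^j$) one gets
\begin{equation*}
(\nabla_{E_i}A)E_j=-2\sin(2\theta_j)E_i(\theta_j)E_j+\omega_{ij}^k\big(\cos(2\theta_j)-\cos(2\theta_k)\big)E_k,
\end{equation*}
and analogously
\begin{equation*}
(\nabla_{E_i}B)E_j=2\cos(2\theta_j)E_i(\theta_j)E_j+\omega_{ij}^k\big(\sin(2\theta_j)-\sin(2\theta_k)\big)E_k.
\end{equation*}

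Next I would compute the right-hand sides. Since the normal space is $JTM$, the shape operator satisfies $S_{JE_i}E_j=\sum_l h_{jl}^iE_l$ and $Jh(E_i,E_j)=-\sum_k h_{ij}^kE_k$, while \eqref{diagop} gives $JG(E_i,E_j)=\tfrac{1}{\sqrt3}\varepsilon_{ijk}E_k$. Substituting $AE_j$, $BE_j$ and these identities into \eqref{opA} and \eqref{opB}, and using the total symmetry of $h_{ijk}:=h_{ij}^k$, expresses both right-hand sides purely in terms of the $h_{ijk}$, the angles and $\varepsilon_{ijk}$.

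Then I would compare components. For the $E_j$-component the off-diagonal $G$- and $\omega$-contributions drop out (since $\varepsilon_{ijj}=0$ and $\omega_{ij}^j=0$), and using $h_{ij}^j=h_{jj}^i$ the identity from \eqref{opA} reduces to $-2\sin(2\theta_j)E_i(\theta_j)=2\sin(2\theta_j)\,h_{jj}^i$, while the one from \eqref{opB} reduces to $2\cos(2\theta_j)E_i(\theta_j)=-2\cos(2\theta_j)\,h_{jj}^i$. As $\sin(2\theta_j)$ and $\cos(2\theta_j)$ never vanish simultaneously, at least one of these yields $E_i(\theta_j)=-h_{jj}^i$, which is \eqref{derv1}. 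This recovers precisely the components with a repeated index; the only component with three distinct indices, namely $h_{12}^3$, is never of the form $h_{jj}^i$, which explains the stated exception.

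Finally, for the $E_k$-component with $k\neq j$, both identities give linear relations among $\omega_{ij}^k$, $h_{ijk}$ and $\varepsilon_{ijk}$. Applying the product-to-sum formulas $\cos(2\theta_j)-\cos(2\theta_k)=-2\sin(\theta_j+\theta_k)\sin(\theta_j-\theta_k)$ and $\sin(2\theta_j)-\sin(2\theta_k)=2\cos(\theta_j+\theta_k)\sin(\theta_j-\theta_k)$, together with their $+$ counterparts on the right-hand sides, the relation coming from \eqref{opA} carries a common factor $\sin(\theta_j+\theta_k)$ and the one from \eqref{opB} a common factor $\cos(\theta_j+\theta_k)$. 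Dividing by the nonzero factor, both collapse to $h_{ij}^k\cos(\theta_j-\theta_k)=(\tfrac{\sqrt3}{6}\varepsilon_{ij}^k-\omega_{ij}^k)\sin(\theta_j-\theta_k)$, i.e.\ \eqref{derv2} (note $\tfrac{1}{2\sqrt3}=\tfrac{\sqrt3}{6}$). I expect the main obstacle to be the degenerate cases: one must check that whenever the factor from \eqref{opA} vanishes the one from \eqref{opB} does not, and conversely, so that combining the two identities covers every point, while keeping the sign and symmetry conventions for $S$, $h$ and $G$ consistent throughout.
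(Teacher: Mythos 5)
Your argument is correct. The paper itself gives no proof of this lemma but simply imports it from \cite{dvw}, and your derivation --- evaluating \eqref{opA} and \eqref{opB} on the frame of \eqref{diagop}, reading off the $E_j$-component to get \eqref{derv1} (with $h_{12}^3$ escaping because it is never of the form $h_{jj}^i$) and the $E_k$-component to get \eqref{derv2}, with the complementary factors $\sin(\theta_j+\theta_k)$ and $\cos(\theta_j+\theta_k)$ covering the degenerate points --- is exactly the standard computation behind that citation, carried out with the correct sign conventions for $S$, $h$ and $G$.
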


\begin{lemma}
\cite{dvw}
\label{lem:equalangles}
If two of the angles are equal modulo $\pi$, then the Lagrangian submanifold is totally geodesic.
\end{lemma}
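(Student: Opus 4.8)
The plan is to assume, without loss of generality, that $\theta_1\equiv\theta_2\pmod\pi$, so that $\sin(\theta_1-\theta_2)=0$ while $\cos(\theta_1-\theta_2)=\pm1\neq0$, and to show that the totally symmetric components $h_{ij}^k$ all vanish on the open dense set where the frame $\{E_1,E_2,E_3\}$ of \eqref{diagop} is defined. First I would feed the pairs $(j,k)=(1,2)$ and $(2,1)$ into \eqref{derv2}: since the right-hand side carries the factor $\sin(\theta_j-\theta_k)=0$ while the left-hand side carries the nonzero factor $\cos(\theta_j-\theta_k)$, this forces $h_{i1}^2=h_{i2}^1=0$ for every $i$. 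By total symmetry, every component whose index multiset is $\{1,1,2\}$, $\{1,2,2\}$ or $\{1,2,3\}$ then vanishes.

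Next, since $\theta_1-\theta_2$ is locally constant, \eqref{derv1} gives $h_{11}^i=h_{22}^i$ for all $i$. Combining this with the previous step and with minimality $\sum_k h_{kk}^i=0$ (the immersion is necessarily minimal) collapses the second fundamental form to a single function $c:=h_{11}^3=h_{22}^3$, with $h_{33}^3=-2c$ and all remaining components zero; equivalently $h(E_1,E_1)=h(E_2,E_2)=cJE_3$, $h(E_3,E_3)=-2cJE_3$, $h(E_1,E_3)=cJE_1$, $h(E_2,E_3)=cJE_2$ and $h(E_1,E_2)=0$. It remains to prove $c\equiv0$.

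The hard part is exactly this last step, because the first-order relations \eqref{derv1} and \eqref{derv2} are consistent for every value of $c$: their remaining instances merely express the connection coefficients $\omega_{ij}^k$ in terms of $c$ and the angles (for instance $\omega_{13}^2=-\tfrac{\sqrt3}{6}$ and $\omega_{11}^3=-c\cot(\theta_1-\theta_3)$), so $c$ cannot be killed at first order. I would therefore turn to the genuinely second-order Codazzi equation \eqref{codazzi} and evaluate it on the triple $(E_1,E_3,E_3)$. Its right-hand side equals $\tfrac13\sin(2\theta_1-2\theta_3)\,JE_1$ and hence has no component along $JE_2$. To compute the left-hand side I need the normal connection, which from \eqref{eq:G} and \eqref{diagop} satisfies $\nabla^{\perp}_{E_i}(JE_j)=\sum_l\big(\omega_{ij}^l-\tfrac1{\sqrt3}\varepsilon_{ijl}\big)JE_l$; differentiating the reduced $h$ and collecting the $JE_2$-terms, the undetermined in-plane coefficient $\omega_{31}^2$ drops out and, using $\omega_{13}^2=-\tfrac{\sqrt3}{6}$, the $JE_2$-component of the left-hand side reduces to $\tfrac1{\sqrt3}c$. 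Equating it to the vanishing $JE_2$-part of the right-hand side forces $c=0$.

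Finally I would dispose of the degenerate case $\theta_1\equiv\theta_2\equiv\theta_3\pmod\pi$ (where $\sin(\theta_1-\theta_3)=0$ and the above $\cot(\theta_1-\theta_3)$ is unavailable) directly: then \eqref{derv2} applied to all three pairs kills every $h_{ij}^k$ with $j\neq k$, and minimality kills the diagonal components, so $h=0$ at once. In all cases $h\equiv0$ on an open dense set, hence everywhere by continuity, and $M$ is totally geodesic. I expect the Codazzi computation to be the main obstacle: it is the only place where second-order data enter, the bookkeeping of the normal connection together with the partially determined $\omega_{ij}^k$ is delicate, and one must verify that the undetermined in-plane coefficients genuinely cancel so that the clean identity $c=0$ survives.
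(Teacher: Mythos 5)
Your argument is correct, but note that the paper itself offers no proof of this lemma: it is quoted from \cite{dvw}, so there is nothing internal to compare against. Judged on its own, your proof is complete and I could verify the one delicate step. The first-order reductions are right: \eqref{derv2} with $(j,k)=(1,2),(2,1)$ kills every component of $h$ containing both an index $1$ and an index $2$; local constancy of $\theta_1-\theta_2$ together with \eqref{derv1} gives $h_{11}^i=h_{22}^i$; and minimality then leaves only $h_{11}^3=h_{22}^3=:c$, $h_{33}^3=-2c$. Your observation that \eqref{derv2} cannot kill $c$ is also accurate, since its remaining instances only fix $\omega_{13}^2=\omega_{23}^1=-\omega_{21}^3=\tfrac{\sqrt3}{6}$ (careful with signs of $\varepsilon$), $\omega_{11}^3=-c\cot(\theta_1-\theta_3)$ and $\omega_{31}^3=0$, leaving $\omega_{31}^2$ and $\omega_{33}^1$ free. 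For the Codazzi step I recomputed the $JE_2$-component of $\nabla h(E_1,E_3,E_3)-\nabla h(E_3,E_1,E_3)$ using $\nabla^{\perp}_{E_i}(JE_j)=\sum_l\bigl(\omega_{ij}^l-\tfrac{1}{\sqrt3}\varepsilon_{ijl}\bigr)JE_l$, which follows from \eqref{diagop}: the first term contributes $-2c\bigl(2\omega_{13}^2+\tfrac{1}{\sqrt3}\bigr)=0$, the second contributes $-\tfrac{c}{\sqrt3}$ with $\omega_{31}^2$ cancelling exactly as you claim, so the left-hand side is $\tfrac{c}{\sqrt3}$ while the right-hand side of \eqref{codazzi} is $\tfrac13\sin(2\theta_1-2\theta_3)JE_1$ with no $JE_2$-part; hence $c=0$. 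The only point worth making explicit is that the value $\omega_{13}^2=-\tfrac{\sqrt3}{6}$ requires $\sin(\theta_2-\theta_3)\neq0$, which is precisely why the fully degenerate case $\theta_1\equiv\theta_2\equiv\theta_3$ must be (and is) treated separately; your disposal of that case via \eqref{derv2} and minimality is fine.
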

\begin{remark}
\label{sinpi}
By Lemma \ref{lem:equalangles}, we may see that if the Lagrangian submanifold is not totally geodesic, then $\sin(\theta_i-\theta_j)\neq0$,  for $i\neq j$.
\end{remark}

\section{Results}\label{results}
From now on, we identify the tangent vector $X$ with $df(X)$.
\begin{theorem}
\label{thm:comp1}
Let $f:M\longrightarrow\mathbb{S}^3\times\mathbb{S}^3$ be a Lagrangian immersion into the nearly 
K\"ahler manifold $\mathbb{S}^3\times\mathbb{S}^3$, given by
$f=(p,q)$ with angle functions $\theta_1, \theta_2, \theta_3$ and eigenvectors $E_1,E_2,E_3$.
Then $\tilde{f}:M\longrightarrow\mathbb{S}^3\times\mathbb{S}^3$ given by $\tilde{f}=(q,p)$ satisfies:
\begin{itemize}
\item[(i)] $\tilde{f}$ is a Lagrangian immersion,
\item [(ii)] $f$ and $\tilde f$ induce the same metric on $M$,
\item[(iii)] $E_1, E_2,E_3$ are also eigendirections of the operators $\tilde{A},\tilde{B}$ corresponding to the immersion $\tilde{f}$ and the angle functions $\tilde\theta_1, \tilde\theta_2, \tilde\theta_3$ are given by $\tilde\theta_i=\pi-\theta_i,$ for $i=1,2,3$.  
\end{itemize}
\end{theorem}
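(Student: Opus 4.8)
The plan is to exhibit $\tilde f$ as the composition $\tilde f=\sigma\circ f$, where $\sigma:\nks\to\nks$ is the swap map $\sigma(p,q)=(q,p)$, whose differential is $d\sigma(U,V)=(V,U)$, and to deduce all three claims from the way $\sigma$ interacts with the structures $g$, $J$ and $P$. The three computations I would carry out first are: (a) $\sigma$ is an isometry; (b) $\sigma$ is anti-holomorphic, i.e.\ $d\sigma\circ J=-J\circ d\sigma$; and (c) $\sigma$ commutes with the almost product structure, i.e.\ $d\sigma\circ P=P\circ d\sigma$. Each is a direct substitution into the formulas \eqref{kahlermetric}, \eqref{nkalcomp.} and \eqref{nkalprod.}, the only subtlety being that after applying $d\sigma$ the base point becomes $(q,p)$, so in the defining formulas the roles of $p$ and $q$ (and hence of the two vector slots) must be interchanged.

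Granting (a)--(c), the rest is formal. Claim (ii) is immediate: $\sigma$ is an isometry and $f$ induces a fixed metric on $M$, so $\tilde f=\sigma\circ f$ induces the same metric. For claim (i), $\sigma$ is a diffeomorphism, so $\tilde f$ is an immersion; to see that it is Lagrangian, note that $J\,d\tilde f(TM)=J\,d\sigma(df(TM))=-d\sigma(J\,df(TM))$ by (b), and since $f$ is Lagrangian we have $df(TM)\perp J\,df(TM)$, so applying the isometry $d\sigma$ preserves this orthogonality and gives $d\tilde f(TM)\perp J\,d\tilde f(TM)$.

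For claim (iii) I would compute $P\,d\tilde f(E_i)$ directly. Using $\tilde f=\sigma\circ f$, then (c) to pass $P$ through $d\sigma$, then \eqref{Ponimage} for $f$, and finally (b) to convert $d\sigma J$ into $-J\,d\sigma$, one obtains
\begin{equation*}
P\,d\tilde f(E_i)=\cos(2\theta_i)\,d\tilde f(E_i)-\sin(2\theta_i)\,J\,d\tilde f(E_i).
\end{equation*}
Comparing with the defining relation $P\,d\tilde f(E_i)=\cos(2\tilde\theta_i)\,d\tilde f(E_i)+\sin(2\tilde\theta_i)\,J\,d\tilde f(E_i)$ shows that the $E_i$ are again eigendirections of the operators $\tilde A,\tilde B$ associated to $\tilde f$, and that $\cos(2\tilde\theta_i)=\cos(2\theta_i)$ together with $\sin(2\tilde\theta_i)=-\sin(2\theta_i)$. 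Since $\cos(2(\pi-\theta_i))=\cos(2\theta_i)$ and $\sin(2(\pi-\theta_i))=-\sin(2\theta_i)$, the choice $\tilde\theta_i=\pi-\theta_i$ satisfies both relations, which proves (iii).

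I expect the main obstacle to be purely the bookkeeping in steps (a)--(c): the formulas for $g$, $J$ and $P$ are not symmetric in $p$ and $q$, so one must track carefully how each term transforms when $(p,q)$ is replaced by $(q,p)$ and $(U,V)$ by $(V,U)$. Once these three identities are verified, everything else follows without further effort.
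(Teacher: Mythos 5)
Your proof is correct, and it takes a cleaner, more structural route than the paper. The paper writes $df(E_i)=(p\alpha_i,q\beta_i)$ with imaginary quaternions $\alpha_i,\beta_i$, computes $d\tilde f$, $Jd\tilde f$, $Pd\tilde f$ explicitly in these coordinates, and verifies each of (i)--(iii) by direct frame-by-frame substitution into \eqref{kahlermetric}, \eqref{nkalcomp.} and \eqref{nkalprod.}; for instance the Lagrangian condition for both $f$ and $\tilde f$ is reduced to the same identity $\langle\alpha_i,\beta_j\rangle-\langle\beta_i,\alpha_j\rangle=0$. You instead isolate the three properties of the swap map $\sigma$ --- isometry, anti-holomorphy $d\sigma\circ J=-J\circ d\sigma$, and $d\sigma\circ P=P\circ d\sigma$ --- and deduce everything formally; I have checked that all three identities do hold (e.g.\ $d\sigma\bigl(J(U,V)_{(p,q)}\bigr)=\tfrac{1}{\sqrt 3}(-2qp^{-1}U+V,\,2pq^{-1}V-U)=-J(V,U)_{(q,p)}$, and the isometry property follows from invariance of the Euclidean product metric together with anti-holomorphy). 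The underlying computations are the same substitutions the paper performs, but your packaging buys something: it makes clear that the construction is induced by a global anti-holomorphic isometry of the ambient nearly K\"ahler space preserving $P$, it avoids choosing the quaternionic representatives $\alpha_i,\beta_i$ altogether, and the sign flip $\sin(2\tilde\theta_i)=-\sin(2\theta_i)$ (hence $\tilde\theta_i=\pi-\theta_i$ modulo $\pi$) drops out in one line from the anticommutation of $d\sigma$ with $J$. The only thing to make sure of when writing this up in full is the base-point bookkeeping you already flagged, since $J$ and $P$ at $(q,p)$ are given by the formulas with the roles of the two factors exchanged.
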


\begin{proof}
Let  $f:M\longrightarrow\mathbb{S}^3\times\mathbb{S}^3$ given by $f=(p,q)$ 
be a Lagrangian immersion with the angle functions $\theta_1, \theta_2, \theta_3$. 
Then, for any point on $M$, we have a differentiable frame $\{E_1,E_2,E_3\}$ along $M$  satisfying \eqref{Ponimage} such that  
\begin{align}
\label{eq:tangentf}
df(E_i)&=(p\alpha_i,q\beta_i)_{(p,q)}, \; i=1,2,3,
\end{align}
where $\alpha_i, \beta_i$ are imaginary quaternions.
Moreover, for $\tilde{f}$ we have as well
$$
d\tilde{f}(E_i)=(q\beta_i, p\alpha_i)_{(q,p)}, \; i=1,2,3.
$$
 From equations \eqref{nkalcomp.} and \eqref{nkalprod.} 
a direct calculation gives that 
\begin{align}
\label{Pdf}
Pdf(E_i)&=(p\beta_i, q\alpha_i)_{(p,q)},\\
\label{eq:almcomplexf}
Jdf(E_i)&=\frac{1}{\sqrt{3}}\left(p(2\beta_i-\alpha_i), q(-2\alpha_i+\beta_i)\right)_{(p,q)},
\end{align}
and 
\begin{align}
\label{Pdftilde}
Pd\tilde{f}({E}_{i})&=(q\alpha_i, p\beta_i)_{(q,p)},\\
\label{Jdftilde}
Jd\tilde{f}({E}_i)&=\frac{1}{\sqrt{3}}\left(q(2\alpha_i-\beta_i), p(-2\beta_i+\alpha_i)\right)_{(q,p)},
\end{align}
for $i=1,2,3$.
The conditions for $f$ and $\tilde{f}$ to be  Lagrangian immersions write out, respectively, as
\begin{align*}
&g(df(E_i),Jdf(E_j))=0\text{ for $i\neq j$, }\\
&g(d\tilde{f}(E_i),Jd\tilde{f}(E_j))=0 \text{ for $i\neq j$. }
\end{align*}
By \eqref{kahlermetric} and by the previous relations, these conditions become
\begin{align*} 
\frac{4}{3}(\langle\alpha_i,2\beta_j-\alpha_j\rangle+\langle\beta_i,-2\alpha_j+\beta_j \rangle)
-\frac{2}{3}(\langle \alpha_{i} , -2\alpha_j+\beta_j \rangle+\langle 2\beta_j-\alpha_j,\beta_i \rangle)=0,\\
\frac{4}{3}(\langle\beta_i,2\alpha_j-\beta_j\rangle+\langle \alpha_i,-2\beta_j+\alpha_j \rangle)
-\frac{2}{3}(\langle \beta_i,-2\beta_j+\alpha_j \rangle+\langle 2\alpha_j-\beta_j,\alpha_i \rangle)=0,
\end{align*}
respectively.
Since both are equivalent to
$
\langle \alpha_i, \beta_j \rangle -\langle \beta_i,\alpha_j\rangle=0,
$
we conclude that $\tilde{f}$ is a Lagrangian immersion if and only if $f$ is a Lagrangian immersion.\\
In order to prove (ii), we must show that $g(df(E_i),df(E_j))=g(d\tilde{f}(E_i),d\tilde{f}(E_j))$.
By straightforward computations, using \eqref{kahlermetric}, we have
\begin{align}\label{preserve}
g(df(E_i),df(E_j))=&\frac{1}{2}\Bigl( \langle df(E_i),df(E_j)\rangle+ \langle Jdf(E_i),Jdf(E_j)\rangle\Bigr)\\
			=&\frac{1}{2}\Big(  \langle(p\alpha_i,q\beta_i), (p\alpha_j,q\beta_j) \rangle     \Bigr.+\nonumber \\
			&+\Bigl. \frac{1}{3} \langle (p(2\beta_i-\alpha_i),q(-2\alpha_i+\beta_i)  ),(p(2\beta_j-\alpha_j),q(-2\alpha_j+\beta_j))   \rangle \Bigr)\nonumber \\
		=&\frac{2}{3}\Bigl(  2\langle \alpha_i,\alpha_j \rangle +2\langle \beta_i,\beta_j \rangle   - \langle \beta_i,\alpha_j \rangle -\langle \alpha_i,\beta_j \rangle \Bigr).\nonumber
\end{align}
Similarly, we have
\begin{align*}
g(d\tilde f(E_i),d\tilde f(E_j))=&\frac{1}{2}\Bigl( \langle d\tilde f(E_i),d\tilde f(E_j)\rangle+ \langle Jd\tilde f(E_i),Jd\tilde f(E_j)\rangle\Bigr)\\
			=&\frac{1}{2}\Big(  \langle(q\beta_i,p\alpha_i), (q\beta_j,p\alpha_j) \rangle     \Bigr.+\\
			&+\Bigl. \frac{1}{3} \langle (q(2\alpha_i-\beta_i),  p(-2\beta_i+\alpha_i) ),( q(2\alpha_j-\beta_j), p(-2\beta_j+\alpha_j))   \rangle \Bigr)\\
		=&\frac{2}{3}\Bigl(  2\langle \alpha_i,\alpha_j \rangle +2\langle \beta_i,\beta_j \rangle   - \langle \beta_i,\alpha_j \rangle -\langle \alpha_i,\beta_j \rangle \Bigr)
\end{align*}
and we can easily notice that the metric is preserved under the transformation $\tilde{f}$.\\
In order to prove (iii), we see from \eqref{Ponimage} that 
\begin{equation}\label{PPdf}
Pdf(E_i)=\cos(2\theta_i)df(E_i)+\sin(2\theta_i)Jdf(E_i),
\end{equation}
and there exist $\tilde A, \tilde B:TM\rightarrow TM$ with eigenvectors $\tilde E_i$ and angle functions $\tilde\theta_i$ such that 
\begin{equation}
\label{PPdftilde}
Pd\tilde{f}(\tilde E_i)=\cos(2\tilde\theta_i)d\tilde{f}(\tilde E_i)+\sin(2\tilde\theta_i)Jd\tilde{f}(\tilde E_i).
\end{equation}
From \eqref{eq:tangentf} and  \eqref{eq:almcomplexf}, we replace $df(E_i)$ and $Jdf(E_i)$ in \eqref{PPdf} and get:
\begin{equation}
Pdf(E_i)=\Bigl(p\Bigl(\cos(2\theta_i)\alpha_i+\frac{1}{\sqrt{3}}\sin(2\theta_i)(2\beta_i-\alpha_i)\Bigr), q\Bigl(\cos(2\theta_i)\beta_i+\frac{1}{\sqrt{3}}\sin(2\theta_i)(-2\alpha_i+\beta_i)\Bigr)\Bigr).
\end{equation}
Considering now equation \eqref{Pdf} as well, we obtain
\begin{align*}
\alpha_i&=\cos(2\theta_i)\beta_i+\frac{1}{\sqrt{3}}\sin(2\theta_i)(-2\alpha_i+\beta_i),\\
\beta_i&=\cos(2\theta_i)\alpha_i+\frac{1}{\sqrt{3}}\sin(2\theta_i)(2\beta_i-\alpha_i).
\end{align*}
Replacing $\alpha_i$ and $\beta_i$ in \eqref{Pdftilde} with the latter expressions gives
$$
Pd\tilde{f}(E_i)=\cos(-2\theta_i)d\tilde{f}(E_i)+\sin(-2\theta_i)Jd\tilde{f}(E_i).
$$
Comparing this with \eqref{PPdftilde}, we see that  $E_i$ are the eigenvectors of $\tilde{A}$ and $\tilde{B}$ with angle functions
$$
\tilde\theta_i=\pi-\theta_i.
$$

\end{proof}

\begin{theorem}
\label{thm:comp2}
Let $f:M\longrightarrow\mathbb{S}^3\times\mathbb{S}^3$ be a Lagrangian immersion into the nearly 
K\"ahler manifold $\mathbb{S}^3\times\mathbb{S}^3$ given by
$f=(p,q)$ with angle functions $\theta_1, \theta_2, \theta_3$ and eigenvectors $E_1,E_2, E_3.$ 
Then, $f^*:M\longrightarrow\mathbb{S}^3\times\mathbb{S}^3$ given by $f^*=(\bar{p},q\bar{p})$ satisfies:
\begin{itemize}
\item[(i)] $f^*$ is a Lagrangian immersion,
\item [(ii)] $f$ and $f^*$ induce the same metric on $M$,
\item[(iii)] $E_1,E_2,E_3$ are also eigendirections of the operators $A^*,B^*$ corresponding to the immersion $f^*$ and the angle functions $\theta^*_1, \theta^*_2, \theta^*_3$ are given by $\theta^*_i=\frac{2\pi}{3}-\theta_i,$ for $i=1,2,3$.  
\end{itemize}
\end{theorem}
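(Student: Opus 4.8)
The plan is to mirror the proof of Theorem \ref{thm:comp1} almost verbatim, since the operation $f^*=(\bar p, q\bar p)$ is again an explicit quaternionic reparametrization of the two $\mathbb{S}^3$-factors. First I would compute the differential. Writing $df(E_i)=(p\alpha_i, q\beta_i)$ as in \eqref{eq:tangentf}, I differentiate each component of $f^*=(\bar p, q\bar p)$. Since conjugation is $\R$-linear, the first component gives $d(\bar p)(E_i)=\overline{p\alpha_i}=-\bar\alpha_i\,\bar p=\bar p(-\bar p\,\bar\alpha_i\,\bar p)$, and for the second component the product rule yields $d(q\bar p)(E_i)=q\beta_i\bar p+q\,\overline{p\alpha_i}=q(\beta_i\bar p-\bar\alpha_i\bar p)$. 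Using $\bar\alpha_i=-\alpha_i$ for imaginary quaternions and normalising the base point to $(\bar p, q\bar p)$, I would record $df^*(E_i)$ in the form $(\bar p\,\alpha_i^*, (q\bar p)\beta_i^*)$ with $\alpha_i^*,\beta_i^*$ imaginary quaternions expressed through $\alpha_i,\beta_i$ and conjugation by $\bar p$. The key point is that conjugation by the fixed unit quaternion $\bar p$ is an orthogonal map on $\mathrm{Im}\,\mathbb{H}$ preserving the inner product, so scalar products $\langle\alpha_i^*,\beta_j^*\rangle$ reduce to the corresponding products of $\alpha_i,\beta_j$.

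Having these expressions, parts (i) and (ii) follow exactly as before. For (i) I would apply \eqref{nkalcomp.} to obtain $Jdf^*(E_i)$ and check that the Lagrangian condition $g(df^*(E_i),Jdf^*(E_j))=0$ reduces, via \eqref{kahlermetric}, to the same symmetric expression $\langle\alpha_i,\beta_j\rangle-\langle\beta_i,\alpha_j\rangle=0$ that characterises the Lagrangian property of $f$. For (ii) I would compute $g(df^*(E_i),df^*(E_j))$ using the second line of \eqref{preserve}; because conjugation by $\bar p$ is an isometry of $\mathrm{Im}\,\mathbb{H}$, all the inner products $\langle\alpha_i,\alpha_j\rangle$, $\langle\beta_i,\beta_j\rangle$, $\langle\alpha_i,\beta_j\rangle$ appearing there are unchanged, so the final expression $\tfrac{2}{3}(2\langle\alpha_i,\alpha_j\rangle+2\langle\beta_i,\beta_j\rangle-\langle\beta_i,\alpha_j\rangle-\langle\alpha_i,\beta_j\rangle)$ coincides with $g(df(E_i),df(E_j))$.

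The heart of the matter is (iii), the computation of the new angle functions. Here I would apply the product structure \eqref{nkalprod.} to $f^*$ at the base point $(\bar p, q\bar p)$, obtaining $Pdf^*(E_i)$ in the $(\bar p\,\cdot,(q\bar p)\cdot)$ frame, and separately write $df^*(E_i)$ and $Jdf^*(E_i)$ in that same frame. Substituting into the defining relation $P df^*(E_i)=\cos(2\theta_i^*)df^*(E_i)+\sin(2\theta_i^*)Jdf^*(E_i)$ and comparing the two quaternionic components gives a linear system in $\cos(2\theta_i^*),\sin(2\theta_i^*)$ whose coefficients are built from $\alpha_i,\beta_i$ and the old relations $\alpha_i=\cos(2\theta_i)\beta_i+\tfrac{1}{\sqrt3}\sin(2\theta_i)(-2\alpha_i+\beta_i)$, $\beta_i=\cos(2\theta_i)\alpha_i+\tfrac{1}{\sqrt3}\sin(2\theta_i)(2\beta_i-\alpha_i)$ derived in the proof of Theorem \ref{thm:comp1}. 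Solving this system should force $(\cos(2\theta_i^*),\sin(2\theta_i^*))=(\cos(\tfrac{4\pi}{3}-2\theta_i),\sin(\tfrac{4\pi}{3}-2\theta_i))$, i.e.\ $\theta_i^*=\tfrac{2\pi}{3}-\theta_i$.

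The main obstacle I anticipate is purely computational and lies entirely in step (iii): the interplay between the quaternionic multiplication in $q\bar p$ and the definitions of $J$ and $P$ makes the two components of $Pdf^*(E_i)$ considerably less symmetric than in the $(q,p)$ swap of Theorem \ref{thm:comp1}, so care is needed to keep track of the conjugation by $\bar p$ and of the factor $\tfrac{1}{\sqrt3}$. The cleanest route is probably to express everything in the transported frame $\{df^*(E_i),Jdf^*(E_i)\}$ from the outset, so that the linear algebra producing the shift $-\tfrac{2\pi}{3}$ in the angle is isolated from the quaternionic bookkeeping; the appearance of $\tfrac{2\pi}{3}$ rather than $\pi$ reflects the combination of a conjugation (which on its own would give a $\pi$-type reflection, as for $\tilde f$) with the multiplicative twist $q\mapsto q\bar p$.
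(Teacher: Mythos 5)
Your proposal follows the paper's proof essentially verbatim: write $df(E_i)=(p\alpha_i,q\beta_i)$, compute $df^*$, $Jdf^*$ and $Pdf^*$ explicitly in quaternionic form, reduce (i) and (ii) to the inner products of the $\alpha_i,\beta_i$ (both Lagrangian conditions becoming $\langle\alpha_i,\beta_j\rangle-\langle\beta_i,\alpha_j\rangle=0$ and both metrics becoming $\tfrac{2}{3}(2\langle\alpha_i,\alpha_j\rangle+2\langle\beta_i,\beta_j\rangle-\langle\beta_i,\alpha_j\rangle-\langle\alpha_i,\beta_j\rangle)$), and verify (iii) by substituting the relations $\alpha_i=\cos(2\theta_i)\beta_i+\tfrac{1}{\sqrt3}\sin(2\theta_i)(-2\alpha_i+\beta_i)$ and $\beta_i=\cos(2\theta_i)\alpha_i+\tfrac{1}{\sqrt3}\sin(2\theta_i)(2\beta_i-\alpha_i)$ from Theorem \ref{thm:comp1} into $Pdf^*(E_i)$, exactly as the paper does. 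Only watch the sign in your very first computation: $\overline{p\alpha_i}=\bar\alpha_i\,\bar p=-\alpha_i\bar p$, so $df^*(E_i)=(-\alpha_i\bar p,\;q(\beta_i-\alpha_i)\bar p)$, i.e.\ $\alpha_i^*=-p\alpha_i\bar p$ and $\beta_i^*=p(\beta_i-\alpha_i)\bar p$; your intermediate expression $-\bar\alpha_i\bar p$ carries a spurious minus sign which, combined with $\bar\alpha_i=-\alpha_i$, would flip the first component and derail the verification of $\theta_i^*=\tfrac{2\pi}{3}-\theta_i$ in (iii).
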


\begin{proof}
Let $f:M\longrightarrow\mathbb{S}^3\times\mathbb{S}^3$ given by $f=(p,q)$ be a Lagrangian immersion with the angle functions $\theta_1, \theta_2, \theta_3$. 
Then, for any point on $M$, we have a differentiable frame $\{E_i\} $ along $M$  satisfying \eqref{Ponimage} and we may write	
\begin{align}
df(E_i)&=(p\alpha_i,q\beta_i)_{(p,q)},\label{df2}\\
df^*(E_i)&=(\bar{p}\alpha^*_i, q\bar{p}\beta^*_i)_{(\bar{p},q\bar p)},\label{dfstar}
\end{align}
for  $ i=1,2,3$ and $\alpha_i, \beta_i,\alpha^*_i, \beta^*_i$  imaginary quaternions.
Moreover  we have that 
\begin{align*}
&\alpha^*_i=-p\alpha_i\bar p,\\
&\beta^*_i=p(\beta_i-\alpha_i)\bar{p}, 
\end{align*}
where we have used
\begin{align*}
df^*(E_i)=D_{E_i}f^*=(D_{E_i}\bar{p},D_{E_i}(q\bar{p}))=(\overline{D_{E_i}p}, (D_{E_i}q)\bar{p}+q(\overline{D_{E_i}p}))\overset{\eqref{df2}}{=}(-\alpha_i\bar p, q(\beta_i-\alpha_i)\bar p)
\end{align*}for the Euclidean connection $D$. 
Furthermore, by \eqref{nkalcomp.} and \eqref{nkalprod.}, we obtain again \eqref{Pdf} and \eqref{eq:almcomplexf} as well as
\begin{align}
\label{Pdftilde2}
Pdf^*({E}_{i})&=((\beta_i-\alpha_i)\bar{p}, -q\alpha_i\bar p  )_{(\bar p,q\bar p)},\\
\label{Jdfstar}
Jdf^*({E}_i)&=\frac{1}{\sqrt{3}}\left((2\beta_i-\alpha_i)\bar p, q(\alpha_i+\beta_i)\bar p \right)_{(\bar p,q\bar p)}
\end{align}
for $i=1,2,3$.
A straightforward computation gives,  for $i\neq j$, that 
\begin{align*}
g(df^*(E_i),Jdf^*(E_j))=\frac{2}{\sqrt{3}}(\langle \beta_i,\alpha_j\rangle-\langle \alpha_i,\beta_j\rangle),
\end{align*}
which, as in the proof of Theorem \ref{thm:comp1}, shows that $f^*$ is a Lagrangian immersion if and only if $f$ is a Lagrangian immersion.\\
To prove (ii), we must show that $g(df(E_i),df(E_j))=g(df^*(E_i),df^*(E_j))$.
By straightforward computations, using \eqref{kahlermetric}, we have
\begin{align*}
g(df^*(E_i),d f^*(E_j))&=\frac{1}{2}\Bigl( \langle df^*(E_i),df^*(E_j)\rangle+ \langle Jdf^*(E_i),Jdf^*(E_j)\rangle\Bigr)\\
			&=\frac{1}{2}\Big(  \langle( -\alpha_i \bar p , q(\beta_i-\alpha_i)\bar p), ( -\alpha_j \bar p , q(\beta_j-\alpha_j)\bar p) \rangle     \Bigr.+\\
			&+\Bigl. \frac{1}{3} \langle ((2\beta_i-\alpha_i)\bar p,  q(\alpha_i+\beta_i)\bar p ),((2\beta_j-\alpha_j)\bar p,  q(\alpha _j+\beta_j)\bar p ) \rangle \Bigr)\\
		&=\frac{2}{3}\Bigl(  2\langle \alpha_i,\alpha_j \rangle +2\langle \beta_i,\beta_j \rangle   - \langle \beta_i,\alpha_j \rangle -\langle \alpha_i,\beta_j \rangle \Bigr),
\end{align*}
and, comparing it to \eqref{preserve}, we can easily notice that the metric is preserved under the transformation $f^*$.\\
In order to prove (iii), we see from \eqref{Ponimage} that 
\begin{equation}\label{PPdf2}
Pdf(E_i)=\cos(2\theta_i)df(E_i)+\sin(2\theta_i)Jdf(E_i),
\end{equation}
and, associated with the second immersion $f^*$, there exist $ A^*,  B^*:TM\rightarrow TM$ with eigenvectors $ E^*_i$ and angle functions $\theta^*_i$ such that 
\begin{equation}
\label{PPdftilde2}
Pdf^*( E^*_i)=\cos(2\theta^*_i)df^*( E^*_i)+\sin(2\theta^*_i)Jdf^*( E^*_i).
\end{equation}
As in the proof of the previous theorem, we have 
\begin{align*}
\alpha_i&=\cos(2\theta_i)\beta_i+\frac{1}{\sqrt{3}}\sin(2\theta_i)(-2\alpha_i+\beta_i),\\
\beta_i&=\cos(2\theta_i)\alpha_i+\frac{1}{\sqrt{3}}\sin(2\theta_i)(2\beta_i-\alpha_i).
\end{align*}
On the one hand, replacing $\alpha_i$ and $\beta_i$ in \eqref{Pdftilde2} with the latter expressions, we see that
$$
Pdf^*(E_i)=\Big( [\cos(2\theta_i)(\alpha_i-\beta_i)+\frac{1}{\sqrt{3}}\sin(2\theta_i)(\beta_i+\alpha_i)]\bar p,-q[\cos(2\theta_i)\beta_i+\frac{1}{\sqrt{3}}\sin(2\theta_i)(-2\alpha_i+\beta_i)  ]\bar p \Big).
$$
On the other hand, we see that for $\theta^*_i=\frac{2\pi}{3}-\theta_i$, the following holds:
\begin{align*}
\cos(2\theta^*_i)df^*( E_i)+&\sin(2\theta^*_i)Jdf^*( E_i)=\cos(\frac{4\pi}{3}-2\theta_i)df^*(E_i)+\sin(\frac{4\pi}{3}-2\theta_i)Jdf^*(E_i)\\
=&\frac{1}{2}[(-\cos(2\theta_i-\sqrt{3}\sin(2\theta_i)))df^*(E_i)+(-\sqrt{3}\cos(2\theta_i)+\sin(2\theta_i)Jdf^*(E_i))]\\
\overset{\eqref{dfstar}, \eqref{Jdfstar}}{=}&([\cos(2\theta_i)(\alpha_i-\beta_i)+\frac{\sin(2\theta_i)}{\sqrt{3}}(\alpha_i+\beta_i)]\bar p, q[ \cos(2\theta_i)(-\beta_i)+\frac{\sin(2\theta_i)}{\sqrt{3}}(2\alpha_i-\beta_i) ]\bar p ).
\end{align*}
Therefore, \eqref{PPdftilde2} holds for $E_i^*=E_i$ and $\theta_i^*=\frac{2\pi}{3}-\theta_i$. This concludes point (iii) of the theorem.

\end{proof}


\begin{lemma}
\label{lem:fundeqcons}
Let $M$ be a Lagrangian submanifold of the nearly K\"ahler manifold $ \nks$
with constant angle functions $\theta_1, \theta_2, \theta_3$. 
\begin{itemize}
\item[i.] If $M$ is a non-totally geodesic submanifold, then the nonzero components of $\omega_{ij}^k$ are given by 
\begin{align}
\label{eq:consconn1}
\omega_{12}^3&=\frac{\sqrt{3}}{6}-\frac{\cos{(\theta_2-\theta_3)}}{\sin{(\theta_2-\theta_3)}}h_{12}^3,\\
\label{eq:consconn2}
\omega_{23}^1&=\frac{\sqrt{3}}{6}+\frac{\cos{(\theta_1-\theta_3)}}{\sin{(\theta_1-\theta_3)}}h_{12}^3,\\
\label{eq:consconn3}
\omega_{31}^2&=\frac{\sqrt{3}}{6}-\frac{\cos{(\theta_1-\theta_2)}}{\sin{(\theta_1-\theta_2)}}h_{12}^3.
\end{align}
 
\item[ii.] The Codazzi equations of the submanifold $M$ are as followings:
\begin{align}
\label{eq:conscod1}
E_i(h_{12}^3)&=0, \quad i=1,2,3,\\
\label{eq:conscod2}
h_{12}^3\left(2(\omega_{13}^2+\omega_{21}^3)+\frac{1}{\sqrt{3}}\right)&=\frac{1}{3}\sin(2(\theta_1-\theta_2)),\\
\label{eq:conscod3}
h_{12}^3\left(2(\omega_{12}^3+\omega_{31}^2)-\frac{1}{\sqrt{3}}\right)&=\frac{1}{3}\sin(2(\theta_1-\theta_3)),\\
\label{eq:conscod4}
h_{12}^3\left(2(\omega_{21}^3+\omega_{32}^1)+\frac{1}{\sqrt{3}}\right)&=\frac{1}{3}\sin(2(\theta_2-\theta_3)).
\end{align}

\item[iii.] The Gauss equations of the submanifold $M$ are given by 
\begin{align}
\label{eq:consgauss1}
\frac{5}{12}+\frac{1}{3}\cos(2(\theta_1-\theta_2))-(h_{12}^3)^2=
-\omega_{21}^3\omega_{13}^2+\omega_{12}^3\omega_{31}^2-\omega_{21}^3\omega_{31}^2,\\
\label{eq:consgauss2}
\frac{5}{12}+\frac{1}{3}\cos(2(\theta_1-\theta_3))-(h_{12}^3)^2=
-\omega_{31}^2\omega_{12}^3+\omega_{13}^2\omega_{21}^3-\omega_{31}^2\omega_{21}^3,\\
\label{eq:consgauss3}
\frac{5}{12}+\frac{1}{3}\cos(2(\theta_2-\theta_3))-(h_{12}^3)^2=
-\omega_{32}^1\omega_{21}^3+\omega_{23}^1\omega_{12}^3-\omega_{32}^1\omega_{12}^3.
\end{align} 
\end{itemize}
\end{lemma}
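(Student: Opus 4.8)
The plan is to extract all three groups of equations from the constancy of the angle functions together with the structure equations \eqref{derv1}, \eqref{derv2}, \eqref{codazzi} and \eqref{Gauss2}, treating the parts in order. For part (i) I would first note that constancy gives $E_i(\theta_j)=0$ for all $i,j$, so \eqref{derv1} forces $h_{jj}^i=0$; since the cubic form $h_{ij}^k$ is totally symmetric, every component with a repeated index vanishes and the only possibly nonzero component is $h_{12}^3=h_{13}^2=\dots$ with all indices distinct. Feeding this into \eqref{derv2} with a repeated index, and invoking Remark \ref{sinpi} to divide by $\sin(\theta_i-\theta_j)\neq0$, kills every $\omega_{ij}^k$ with a repeated index; together with $\omega_{ij}^k=-\omega_{ik}^j$ this leaves only $\omega_{12}^3,\omega_{23}^1,\omega_{31}^2$ independent. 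Specialising \eqref{derv2} to $(i,j,k)=(1,2,3),(2,3,1),(3,1,2)$ and rearranging (again dividing by the nonzero sines, and rewriting $\cos(\theta_3-\theta_1)/\sin(\theta_3-\theta_1)=-\cos(\theta_1-\theta_3)/\sin(\theta_1-\theta_3)$) yields exactly \eqref{eq:consconn1}--\eqref{eq:consconn3}.

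For part (ii) I would rewrite the Codazzi equation \eqref{codazzi} in the frame $\{E_i\}$ by pairing it with $JE_k$. The key computational step is an identity expressing $g((\nabla^\perp_{E_m}h)(E_i,E_j),JE_k)$ in terms of $E_m(h_{ij}^k)$, the coefficients $\omega_{ij}^k$, and the non-K\"ahler term $\tfrac{1}{\sqrt3}\varepsilon$; this is obtained by differentiating $h_{ij}^k=g(h(E_i,E_j),JE_k)$ and using $\tilde\nabla_{E_m}(JE_k)=G(E_m,E_k)+J\tilde\nabla_{E_m}E_k$, where $G(E_m,E_k)=-\tfrac{1}{\sqrt3}\varepsilon_{mkl}JE_l$ by \eqref{diagop} and the tangential piece $Jh(E_m,E_k)$ drops out against the normal vector $h(E_i,E_j)$. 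Since $A,B$ are diagonal, the right-hand side of \eqref{codazzi} simplifies via $g(JBE_a,JE_d)=\sin(2\theta_a)\delta_{ad}$ and $g(JAE_a,JE_d)=\cos(2\theta_a)\delta_{ad}$; the choice $(X,Y,Z)=(E_1,E_2,E_2)$ paired with $JE_1$ then gives $\tfrac13\sin(2(\theta_1-\theta_2))$ on the right and, after using $h_{jj}^i=0$ and $\omega_{12}^3=-\omega_{13}^2$, exactly $h_{12}^3\bigl(2(\omega_{13}^2+\omega_{21}^3)+\tfrac{1}{\sqrt3}\bigr)$ on the left, which is \eqref{eq:conscod2}; analogous choices produce \eqref{eq:conscod3} and \eqref{eq:conscod4}. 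Finally, index patterns for which the right-hand side vanishes identically (e.g. $(E_3,E_1,E_2)$ paired with $JE_3$) collapse the left-hand side to $E_i(h_{12}^3)=0$, establishing \eqref{eq:conscod1} and in particular that $h_{12}^3$ is constant.

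For part (iii) I would compute the sectional curvatures $g(R(E_i,E_j)E_j,E_i)$ in two ways. Intrinsically, since by part (i) and \eqref{eq:conscod1} the $\omega_{ij}^k$ are constant, evaluating $R(E_i,E_j)E_j=\nabla_{E_i}\nabla_{E_j}E_j-\nabla_{E_j}\nabla_{E_i}E_j-\nabla_{[E_i,E_j]}E_j$ directly produces the quadratic $\omega$-expressions on the right of \eqref{eq:consgauss1}--\eqref{eq:consgauss3}. Extrinsically, evaluating the right-hand side of \eqref{Gauss2} on the same vectors gives $\tfrac{5}{12}+\tfrac13\cos(2(\theta_i-\theta_j))$ from the curvature and $A,B$ terms, while the commutator term $g([S_{JE_i},S_{JE_j}]E_j,E_i)$ evaluates to $-(h_{12}^3)^2$, because each shape operator $S_{JE_k}$ has matrix $(h_{ab}^k)$ carrying the single entry $h_{12}^3$ in its one off-diagonal slot. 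Equating the two computations yields \eqref{eq:consgauss1}--\eqref{eq:consgauss3}.

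The main obstacle is the bookkeeping in part (ii): correctly deriving the frame expression for $g((\nabla^\perp_{E_m}h)(E_i,E_j),JE_k)$, and in particular tracking the sign and index contraction of the $\tfrac{1}{\sqrt3}\varepsilon$-contribution coming from $\tilde\nabla J\neq0$, and then performing the antisymmetrisation in the first two slots of \eqref{codazzi} for the precise index patterns. Once that identity is in place, the fact that only $h_{12}^3$ is nonzero makes all the remaining evaluations routine.
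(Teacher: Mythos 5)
Your proposal is correct and follows essentially the same route as the paper: deduce from \eqref{derv1} and total symmetry that only $h_{12}^3$ survives, use \eqref{derv2} with Remark \ref{sinpi} to kill the remaining $\omega_{ii}^j$ and solve for $\omega_{12}^3,\omega_{23}^1,\omega_{31}^2$, and then evaluate \eqref{codazzi} and \eqref{Gauss2} on the same index patterns the paper uses. You in fact supply more detail than the published proof (the frame formula for $\nabla h$ including the $G$-correction, and the explicit form of the shape operators), and the details check out.
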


\begin{proof}
Suppose that $M$ is a Lagrangian submanifold of the nearly K{\"a}hler $\mathbb{S}^3\times\mathbb{S}^3$ for which the angle functions
$\theta_1, \theta_2, \theta_3$ are constant. Thus,  equation \eqref{derv1} immediately implies that 
all coefficients of the second fundamental form are zero except $h_{12}^3$. Using \eqref{derv2}, we see that $\omega_{ii}^j=0,i\neq j$. As $\omega_{ij}^k=-\omega_{ik}^j$, it follows that $\omega_{12}^3,\omega_{23}^1,\omega_{31}^2$ are the only non-zero components out of $\omega_{ij}^k$.
From \eqref{derv2} and by Remark \ref{sinpi},  we calculate the nonzero connection forms as in \eqref{eq:consconn1}-\eqref{eq:consconn3}. 
Taking $E_1,E_2,E_3$ and $\ E_3,E_1,E_2$ for the vector fields $X, Y, Z$ in the Codazzi equation \eqref{codazzi}, we get  \eqref{eq:conscod1},  and for $E_1,E_2,E_2; E_1,E_3,E_3; E_2,E_3,E_3$ we obtain \eqref{eq:conscod1}-\eqref{eq:conscod4}. Moreover, we evaluate the Gauss equation \eqref{Gauss2} successively for 
$E_1, E_2, E_2$; $E_1, E_3, E_3$; $E_3, E_2, E_2$ and then we obtain the given equations, respectively.
\end{proof}

\begin{theorem}
\label{thm:clascons}
A Lagrangian submanifold of the nearly K\"ahler manifold $\mathbb{S}^3\times\mathbb{S}^3$ for which all angle functions are constant is either totally geodesic or has constant sectional curvature in $\mathbb{S}^3\times\mathbb{S}^3$.  
\end{theorem}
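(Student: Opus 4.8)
The plan is to assume that $M$ is not totally geodesic and to prove that it then has constant sectional curvature. Under this assumption $h:=h_{12}^{3}$ is the only possibly nonzero component of the second fundamental form, it is a nonzero constant by \eqref{eq:conscod1}, and $\sin(\theta_i-\theta_j)\neq0$ for $i\neq j$ by Remark \ref{sinpi}. I would first read the curvature off the Gauss equation \eqref{Gauss2}: since $A$ and $B$ are diagonal in $\{E_1,E_2,E_3\}$ and the shape operators $S_{JE_k}$ contain only the entries dictated by $h$, the curvature tensor is diagonal in this frame, with sectional curvatures
\[
K_{ij}=g(R(E_i,E_j)E_j,E_i)=\tfrac{5}{12}+\tfrac{1}{3}\cos\bigl(2(\theta_i-\theta_j)\bigr)-h^{2}
\]
--- precisely the left-hand sides of \eqref{eq:consgauss1}--\eqref{eq:consgauss3} --- and with every mixed component $g(R(E_i,E_j)E_i,E_k)$, $k\neq j$, vanishing. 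Thus $M$ has constant sectional curvature if and only if the three numbers $\cos(2(\theta_1-\theta_2))$, $\cos(2(\theta_1-\theta_3))$, $\cos(2(\theta_2-\theta_3))$ coincide, and the whole problem is reduced to proving these equalities.

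To prove them I would feed the connection coefficients \eqref{eq:consconn1}--\eqref{eq:consconn3} into the Codazzi equations \eqref{eq:conscod2}--\eqref{eq:conscod4} and rewrite each difference of cotangents as a single quotient of sines, using
\[
\frac{\cos(\theta_2-\theta_3)}{\sin(\theta_2-\theta_3)}-\frac{\cos(\theta_1-\theta_3)}{\sin(\theta_1-\theta_3)}=\frac{\sin(\theta_1-\theta_2)}{\sin(\theta_2-\theta_3)\sin(\theta_1-\theta_3)}.
\]
For example \eqref{eq:conscod2} becomes $\tfrac13\sin(2(\theta_1-\theta_2))=-\tfrac{h}{\sqrt3}+2h^{2}\,\tfrac{\sin(\theta_1-\theta_2)}{\sin(\theta_2-\theta_3)\sin(\theta_1-\theta_3)}$, and the remaining two equations \eqref{eq:conscod3}, \eqref{eq:conscod4} are analogous, up to signs. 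Solving each for $h/\sqrt3$ and equating the three results pairwise, then applying $\sin^{2}\!A-\sin^{2}\!B=\sin(A+B)\sin(A-B)$ and the sum-to-product identities, I expect each pair to reduce to a relation of the form
\[
\cos(\theta_a-\theta_b)\,\sin(\theta_c-\theta_d)\,\sin(\theta_e-\theta_f)=\pm\,3h^{2},
\]
holding unless the corresponding auxiliary factor $\sin\bigl((\theta_c-\theta_d)+(\theta_e-\theta_f)\bigr)$ vanishes, i.e. unless one of the angles is the average of the other two modulo $\pi$.

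The argument then turns on a short case distinction. If no auxiliary factor vanishes, all three relations hold at once; adding the two that carry opposite signs of $3h^{2}$ and cancelling the common factor $\sin(\theta_1-\theta_2)\neq0$ forces $\sin\bigl((\theta_1-\theta_3)+(\theta_2-\theta_3)\bigr)=0$, which contradicts the non-vanishing required by the remaining relation. Hence at least one averaging condition must hold; after relabeling I may assume $\theta_2-\theta_3\equiv\theta_1-\theta_2\pmod\pi$, so that $\theta_1-\theta_3\equiv2(\theta_1-\theta_2)$. Substituting this into the surviving two Codazzi equations and eliminating $h$ should leave a single trigonometric equation in $\theta_1-\theta_2$ whose only root compatible with $\sin(\theta_i-\theta_j)\neq0$ is $\cos(2(\theta_1-\theta_2))=-\tfrac12$. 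Then $\cos(2(\theta_1-\theta_2))=\cos(2(\theta_2-\theta_3))=\cos(2(\theta_1-\theta_3))=-\tfrac12$, all $K_{ij}$ equal $\tfrac14-h^{2}$, and $M$ has constant sectional curvature.

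The delicate point is the middle step: performing the trigonometric elimination without losing solutions and, above all, disposing of the degenerate subcases. A vanishing $\sin(\theta_i-\theta_j)$ is already excluded by Remark \ref{sinpi} and Lemma \ref{lem:equalangles} (it would make two angles equal, hence $M$ totally geodesic), whereas a vanishing auxiliary factor is exactly an averaging condition and is precisely what drives the final reduction. The main obstacle is therefore the bookkeeping: verifying that every branch of the case analysis either collapses to the totally geodesic situation or terminates at $\cos(2(\theta_i-\theta_j))=-\tfrac12$, so that no further possibilities for constant angle functions survive.
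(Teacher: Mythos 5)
Your strategy is sound and, once the computations are actually carried out, it proves the theorem, but it takes a genuinely leaner route than the paper. Your key observation --- that by the Gauss equation \eqref{Gauss2} the curvature operator is diagonal in $\{E_1\wedge E_2,E_1\wedge E_3,E_2\wedge E_3\}$ with eigenvalues $\tfrac{5}{12}+\tfrac13\cos(2(\theta_i-\theta_j))-(h_{12}^3)^2$, so that everything reduces to showing the three cosines coincide --- does not appear in the paper. The paper instead treats \eqref{eq:1}--\eqref{eq:3} as a linear system in $(2(h_{12}^3)^2,h_{12}^3,1)$ whose determinant must vanish, getting $\sin(\theta_1+\theta_2-2\theta_3)\sin(\theta_2+\theta_3-2\theta_1)\sin(\theta_1+\theta_3-2\theta_2)=0$; it then uses Lemma \ref{lem:sumzero} together with the constructions $\tilde f$ and $f^*$ of Theorems \ref{thm:comp1} and \ref{thm:comp2} to normalize to $\theta_2=\tfrac\pi3$, $\theta_1+\theta_3=\tfrac{2\pi}{3}$, solves explicitly for the angles and for $h_{12}^3\in\{-\tfrac12,-\tfrac14\}$, and computes the curvature from the $\omega_{ij}^k$. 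You bypass the normalization entirely: you only ever need the differences $\theta_i-\theta_j$, you obtain the averaging condition by pairwise subtraction rather than a determinant, and you never need the value of $h_{12}^3$ since $K=\tfrac14-(h_{12}^3)^2$ is constant regardless. What you gain is a shorter proof of exactly this statement; what you lose is the explicit data (angles and $h_{12}^3$) that the paper feeds into Corollary \ref{cc}.

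One computational caveat: the pairwise differences of \eqref{eq:1}--\eqref{eq:3} are not of the form $\cos(\theta_a-\theta_b)\sin(\theta_c-\theta_d)\sin(\theta_e-\theta_f)=\pm3h^2$. The quadratic terms cancel, so each difference is \emph{linear} in $h_{12}^3$ and, after dividing by a nonzero $\sin(\theta_i-\theta_j)$, factors as, e.g.,
\begin{equation*}
\sin(2\theta_2-\theta_1-\theta_3)\left(\frac{h_{12}^3}{\sqrt3}\,\frac{\sin(\theta_1-\theta_3)}{\sin(\theta_1-\theta_2)\sin(\theta_2-\theta_3)}+\frac23\right)=0.
\end{equation*}
So the correct dichotomy is: either an averaging condition holds, or $h_{12}^3$ equals one of three explicit quotients of sines; equating any two of those quotients gives $\sin^2(\theta_i-\theta_k)=\sin^2(\theta_j-\theta_k)$, which by the product formula forces an averaging condition after all. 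Your endgame then goes through as predicted: with $\theta_1-\theta_2\equiv\theta_2-\theta_3=:\alpha\pmod\pi$ the Codazzi relations collapse to the paper's \eqref{eq:1n} and \eqref{eq:3n}, whose difference factors as $(1+2\cos2\alpha)\bigl(\tfrac{h_{12}^3}{\sqrt3\sin2\alpha}+\tfrac23\bigr)=0$, and the vanishing of the second factor is incompatible with \eqref{eq:1n} since it yields $3\sin^2 2\alpha=0$. Hence $\cos(2\alpha)=-\tfrac12$, all three sectional curvatures equal $\tfrac14-(h_{12}^3)^2$, and the theorem follows.
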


\begin{proof}
Suppose that $M$ is a Lagrangian submanifold in the nearly K\"ahler $\mathbb{S}^3\times\mathbb{S}^3$ with constant angle functions $\theta_1, \theta_2, \theta_3$. From  equation \eqref{derv1} and the fact that $h_{ij}^k$ are totally symmetric,
all coefficients are zero except $h_{12}^3$. Also, the Codazzi equations given by \eqref{eq:conscod1}-\eqref{eq:conscod4}
are valid for $M$. Equation \eqref{eq:conscod1} implies that $h_{12}^3$ is constant and thus, there are two cases that may occur:

\textit{Case 1.} $h_{12}^3=0$, that is, $M$ is a totally geodesic Lagrangian submanifold in the nearly K\"ahler 
$\mathbb{S}^3\times\mathbb{S}^3$. 

\textit{Case 2.} $h_{12}^3$ is a nonzero constant, that is, $M$ is non-totally geodesic. 
In this case, the nonzero components of $\omega_{ij}^k$ for the submanifold $M$ are given by 
\eqref{eq:consconn1}-\eqref{eq:consconn3}. 
Replacing the coresponding $\omega_{ij}^k$ in the Codazzi equations given by \eqref{eq:conscod2}-\eqref{eq:conscod4}, 
we obtain the following system of equations:
\begin{align}
\label{eq:1}
2(h_{12}^3)^2-\frac{1}{\sqrt{3}}\frac{\sin(\theta_1-\theta_3)\sin(\theta_2-\theta_3)}{\sin(\theta_1-\theta_2)} h_{12}^3
-\frac{2}{3}\cos(\theta_1-\theta_2)\sin(\theta_1-\theta_3)\sin(\theta_2-\theta_3)=0,\\
\label{eq:2}
2(h_{12}^3)^2-\frac{1}{\sqrt{3}}\frac{\sin(\theta_1-\theta_2)\sin(\theta_1-\theta_3)}{\sin(\theta_2-\theta_3)} h_{12}^3
-\frac{2}{3}\cos(\theta_2-\theta_3)\sin(\theta_1-\theta_2)\sin(\theta_1-\theta_3)=0,\\
\label{eq:3}
2(h_{12}^3)^2-\frac{1}{\sqrt{3}}\frac{\sin(\theta_2-\theta_3)\sin(\theta_1-\theta_2)}{\sin(\theta_1-\theta_3)} h_{12}^3
+\frac{2}{3}\cos(\theta_1-\theta_3)\sin(\theta_2-\theta_3)\sin(\theta_1-\theta_2)=0.
\end{align}
Notice that by Remark \ref{sinpi}, we have $\sin(\theta_i-\theta_j)\neq 0$. Considering the above system of equations as a linear system in $2(h_{12}^3)^2, h_{12}^3,1$ and since $h_{12}^3$ is a nonzero constant, we see that the matrix of the system  must have determinant zero. By a direct calculation, we find 
$$\sin(\theta_1+\theta_2-2\theta_3)\sin(\theta_2+\theta_3-2\theta_1)\sin(\theta_1+\theta_3-2\theta_2)=0.$$ 
Given the symmetry in $\theta_1,\theta_2,\theta_3$, it is sufficient to assume that $\sin(\theta_1+\theta_3-2\theta_2)=0$. Thus, considering also Lemma \ref{lem:sumzero}, we may write 
\begin{align}
&\theta_1+\theta_3-2\theta_2=k_1\pi,\\
&\theta_1+\theta_2+\theta_3=k_2\pi
\end{align}for $k_1,k_2\in\mathbb{Z}$.
As each angle function is determined modulo $\pi$, there exist $l_1,l_2,l_3\in\mathbb{Z}$ such that the above equations are satisfied by new angle functions $\theta_1+l_1\pi$, $\theta_2+l_2\pi$  and $\theta_3+l_3\pi$:
\begin{align*}
(\theta_1+l_1\pi)+(\theta_3+l_3\pi)-2(\theta_2+l_2\pi)=k_1^*\pi,\\
(\theta_1+l_1\pi)+(\theta_2+l_2\pi)+(\theta_3+l_3\pi)=k_2^*\pi.
\end{align*}
 This implies that 
\begin{align*}
k_1&=k_1^*-(l_1+l_3-2l_2),\\
k_2&=k_2^*-(l_1+l_2+l_3).
\end{align*}
Hence, we may assume $k_2=1$. Allowing now only changes of angles which preserve this property, we must have that $l_2=-l_1-l_3$ and $k_1=k_1^*-3(l_1+l_3).$ So we may additionally assume that $k_1\in\{-1,0,1\}$. Therefore, we have three cases:
\begin{itemize}
\item [(i)] $\theta_1+\theta_3-2\theta_2=-\pi \ \text{and}\ \theta_1+\theta_2+\theta_3=\pi,$
\item [(ii)]  $\theta_1+\theta_3-2\theta_2=0 \ \text{and}\ \theta_1+\theta_2+\theta_3=\pi,$
\item [(iii)]  $\theta_1+\theta_3-2\theta_2=\pi\ \text{and}\ \theta_1+\theta_2+\theta_3=\pi.$
\end{itemize}
Finally, this reduces to 
\begin{itemize}
\item[(i)] $\theta_2=0$ and $\theta_1+\theta_3=\pi$,
\item[(ii)] $\theta_2=\frac{\pi}{3}$ and $\theta_1+\theta_3=\frac{2\pi}{3}$,
\item[(iii)] $\theta_2=\frac{2\pi}{3}$ and $\theta_1+\theta_3=\frac{\pi}{3}$.
\end{itemize}
Using the relations between the angles $\theta_i$ and $\tilde\theta_i$, $\theta^*_i$ 
of the Lagrangian immersions $\tilde{f}$ and $f^*$ in Theorem \ref{thm:comp1} and Theorem \ref{thm:comp2} respectively, 
these three cases can be reduced to a single case, as  shown below:
\begin{equation}
\begin{array}{l}
\theta_2=\frac{\pi}{3}\\
\theta_1+\theta_3=\frac{2\pi}{3}
\end{array}
\overset{\tilde{f}}{\longleftrightarrow}
\begin{array}{l}
\tilde{\theta}_2=\frac{2\pi}{3}\\
\tilde{\theta}_1+\tilde{\theta}_3=\frac{\pi}{3}
\end{array}
\overset{f^*}{\longleftrightarrow}
\begin{array}{l}
{\theta}_2^*=0\\
{\theta}_1^*+\tilde{\theta}_3^*=\pi.
\end{array}
\end{equation}
Remark that according to Theorems \ref{thm:comp1} and \ref{thm:comp2}, the metric  $g$ given by \eqref{kahlermetric} is preserved under transformations $\tilde{f}, f^*$ from which we deduce that the sectional curvature of $M$ is the same in each case.
Therefore, it is sufficient to consider the case that $\theta_2=\frac{\pi}{3}$ and $\theta_1+\theta_3=\frac{2\pi}{3}$. 
By straightforward computations, equations \eqref{eq:conscod2}-\eqref{eq:consgauss3}  reduce to 
\begin{align}
\label{eq:1n}
&2(h_{12}^3)^2-\frac{1}{\sqrt{3}}\sin{(2\alpha)}\ {h_{12}^3}-\frac{1}{3}\sin^2{(2\alpha)}=0,\\
\label{eq:3n}
&2(h_{12}^3)^2-\frac{1}{\sqrt{3}}\frac{\sin^2{\alpha}}{\sin (2\alpha)}\ {h_{12}^3}
+\frac{2}{3}\sin^2{\alpha}\cos( 2\alpha)=0,
\end{align}
where $\alpha:=\theta_1-\frac{\pi}{3}$.
Solving this system of equations, we see that there are four cases that we must discuss:
\begin{itemize}
\item [(a)] $h_{12}^3=-\frac{1}{2}$ and $\alpha=-\frac{\pi}{3}+k\pi,$
\item [(b)] $h_{12}^3=-\frac{1}{4}$ and $\alpha=\frac{\pi}{3}+k\pi,$
\item [(c)] $h_{12}^3=\frac{1}{4}$ and $\alpha=-\frac{\pi}{3}+k\pi,$
\item [(d)] $h_{12}^3=\frac{1}{2}$ and $\alpha=\frac{\pi}{3}+k\pi$
\end{itemize}for some $k\in \mathbb{Z}.$\\
Remark that cases (c) and (d) reduce to cases (a) and (b), respectively, by changing the basis $\{E_1,E_2,E_3\}$ with $\{E_3,E_2,-E_1\}$. Therefore, we will only consider cases (a) and (b).

\textit{Case (a):} $h_{12}^3=-\frac{1}{2}$ and $\theta_1=0, \theta_2=\frac{\pi}{3}, \theta_3=\frac{2\pi}{3}$.
From \eqref{eq:consconn1}-\eqref{eq:consconn3},  we find that all connection forms are zero. 
Thus, $M$ is a flat Lagrangian submanifold in the nearly K{\"a}hler $\mathbb{S}^3\times\mathbb{S}^3$. 

\textit{Case (b):} $h_{12}^3=-\frac{1}{4}$ and $\theta_1=\frac{2\pi}{3}, \theta_2=\frac{\pi}{3}, \theta_3=0$,
In this case, we have that $\omega_{12}^3=\omega_{23}^1=\omega_{31}^2=\frac{\sqrt{3}}{4}$. 
By a straightforward computation, we find that $M$ has constant sectional curvature which is equal to $\frac{3}{16}$.
As a result, the Lagrangian submanifold $M$ of the nearly K{\"a}hler manifold $\mathbb{S}^3\times\mathbb{S}^3$ with 
constant angle functions $\theta_1, \theta_2, \theta_3$, which is not totally geodesic, has constant sectional curvature.
\end{proof}

Combining the classification theorems in \cite{dvw} and \cite{zhd} and Theorem \ref{thm:clascons}, we state the following:

\begin{corollary}\label{cc}
A Lagrangian submanifold in the nearly K{\"a}hler manifold $\mathbb{S}^3\times\mathbb{S}^3$ whose all angle functions are constant is locally congruent to one of the following immersions: 
\begin{enumerate}
\item
$f\colon \mathbb{S}^3 \to\mathbb{S}^3\times\mathbb{S}^3: u\mapsto (u,1),$ 
\item
$f\colon\mathbb{S}^3 \to\mathbb{S}^3\times\mathbb{S}^3: u\mapsto (1,u),$
\item 
$f\colon \mathbb{S}^3\to\mathbb{S}^3\times\mathbb{S}^3: u\mapsto (u,u),$
\item
$f\colon\mathbb{S}^3\to\mathbb{S}^3\times\mathbb{S}^3: u\mapsto (u,ui),$
\item
$f\colon\mathbb{S}^3\to\mathbb{S}^3\times\mathbb{S}^3: u\mapsto(u^{-1},uiu^{-1}),$
\item 
$f\colon \mathbb{S}^3\to\mathbb{S}^3\times\mathbb{S}^3: u\mapsto(uiu^{-1},u^{-1}),$

\item  $f\colon \mathbb{S}^3\to\mathbb{S}^3\times\mathbb{S}^3: u\mapsto (uiu^{-1},uju^{-1}),$

\item $f: \mathbb R^3\to\mathbb{S}^3\times\mathbb{S}^3:(u,v,w)\mapsto (p(u,w),q(u,v))$, where $p$ and $q$ are constant mean curvature tori in $\mathbb{S}^3$
\begin{align*}
p(u,w)&=\left (\cos u \cos w,\cos u \sin w,\sin u \cos w,\sin u \sin w\right),\\
  q(u,v)&=\frac{1}{\sqrt{2}}\left (\cos v \left(\sin u+\cos u\right),\sin
   v \left(\sin u+\cos u\right)\right . ,\\
   &\qquad \left .  \cos v \left(\sin u-\cos u\right),\sin v \left(\sin
   u-\cos u\right) \right).
\end{align*} 
\end{enumerate}
\end{corollary}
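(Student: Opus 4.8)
The plan is to feed the dichotomy of Theorem~\ref{thm:clascons} into the two existing classification results. By Theorem~\ref{thm:clascons}, a Lagrangian submanifold $M$ of $\nks$ all of whose angle functions are constant is either totally geodesic or has constant sectional curvature, so it suffices to run through these two families separately and collect the immersions that arise.

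First I would dispose of the totally geodesic case. For a totally geodesic immersion the second fundamental form $h$ vanishes identically, so \eqref{derv1} gives $E_i(\theta_j)=-h_{jj}^i=0$ for all $i,j$; hence \emph{every} totally geodesic Lagrangian submanifold automatically has constant angle functions, and conversely the totally geodesic members of our list are exactly those furnished by the classification of totally geodesic Lagrangian submanifolds in \cite{zhd}. This produces the homogeneous immersions of $\mathbb{S}^3$ appearing among items (1)--(7).

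Next I would treat the non-totally-geodesic case, where $M$ has constant sectional curvature. One cannot simply quote \cite{dvw} wholesale here, since constant sectional curvature alone does not force the angle functions to be constant; instead I would re-read the end of the proof of Theorem~\ref{thm:clascons}. There the non-totally-geodesic constant-angle submanifolds were pinned down, after the reductions through $\tilde f$ and $f^*$ of Theorems~\ref{thm:comp1} and~\ref{thm:comp2}, to Case (a), which is flat, and Case (b), which has constant sectional curvature $\tfrac{3}{16}$. Matching these two model geometries against the explicit constant-curvature immersions classified in \cite{dvw} then identifies the flat example with the product of constant-mean-curvature tori in item (8), and the curvature-$\tfrac{3}{16}$ example with a further homogeneous immersion among items (1)--(7).

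I expect the main obstacle to be precisely this final matching step. The dichotomy itself is free, but turning the abstract invariants (totally geodesic; curvature $0$ or $\tfrac{3}{16}$) into concrete congruence classes requires verifying, for each candidate immersion, that its induced operators $A,B$ are the prescribed ones and that the resulting angle functions are constant and compatible with Lemma~\ref{lem:sumzero}; conversely one must confirm that the lists of \cite{zhd} and \cite{dvw}, once restricted by the constant-angle hypothesis, leave no congruence class unaccounted for. Since congruences of $\nks$ preserve $J$ and $P$, and hence the angle functions, this bookkeeping is routine, but it has to be carried out for all eight families to guarantee completeness of the stated list.
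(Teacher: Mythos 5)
Your proposal follows essentially the same route as the paper: the corollary is obtained there by combining Theorem \ref{thm:clascons} with the classification theorems of \cite{zhd} (totally geodesic case) and \cite{dvw} (constant sectional curvature case), and the paper in fact offers no more detail than this one-line citation. Your extra care about the final matching step --- noting that constant sectional curvature alone does not force constant angle functions, so the flat and curvature-$\tfrac{3}{16}$ models must be identified explicitly within the lists of \cite{dvw} --- is a reasonable elaboration of the same argument rather than a different approach.
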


\begin{theorem}
Let $M$ be a Lagrangian submanifold in the nearly K{\"a}hler manifold $\mathbb{S}^3\times\mathbb{S}^3$ with angle functions 
$\theta_1, \theta_2, \theta_3$. If precisely one of the angle functions is constant, then up to a multiple of $\pi$, it can be either $0, \frac{\pi}{3}$  or 
$\frac{2\pi}{3}$. 
\end{theorem}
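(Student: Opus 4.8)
The plan is to reduce the statement to the single trigonometric condition $\sin(3\theta_1)=0$, exploiting the fact that the other two angle functions genuinely vary. Throughout I would work on the open dense subset of $M$ on which the frame $E_1,E_2,E_3$ of \eqref{diagop} is defined, and relabel so that $\theta_1$ is the constant angle. First I would note that $M$ cannot be totally geodesic: if it were, then $h\equiv 0$ and \eqref{derv1} would force \emph{all three} angle functions to be constant, contradicting the hypothesis that precisely one is. Hence, by Remark \ref{sinpi}, $\sin(\theta_i-\theta_j)\neq 0$ for $i\neq j$. Moreover Lemma \ref{lem:sumzero} gives $\theta_1+\theta_2+\theta_3=k\pi$ with $k$ locally constant, so $\theta_2+\theta_3$ is constant; since $\theta_2$ and $\theta_3$ are \emph{not} constant, all of the variation is carried by $\theta_2-\theta_3$.

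Next I would determine the second fundamental form in this frame. By \eqref{derv1} the constancy of $\theta_1$ gives $h_{11}^i=0$ for $i=1,2,3$, while $E_i(\theta_2)=-E_i(\theta_3)$ yields $h_{22}^i=-h_{33}^i$; together with the totally symmetric component $h_{12}^3$, this fixes every component of $h$ in terms of the single function $\theta_2$ and of $h_{12}^3$. Substituting into \eqref{derv2} then expresses each connection coefficient $\omega_{ij}^k$ (for $j\neq k$) in terms of the angles, $h_{12}^3$ and the derivatives $E_i(\theta_2)$. This is the exact analogue of Lemma \ref{lem:fundeqcons}, except that now the derivative terms $E_i(\theta_2)$ are present and cannot be dropped.

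I would then feed these expressions into the full Codazzi equation \eqref{codazzi} and the Gauss equation \eqref{Gauss2}, evaluated on the same triples of basis vectors used in Lemma \ref{lem:fundeqcons}. This produces a system generalizing \eqref{eq:conscod1}--\eqref{eq:consgauss3}: the same algebraic backbone in $h_{12}^3$ and the angles, plus extra terms linear in the $E_i(\theta_2)$ and in their frame-derivatives, the latter controlled through the integrability identity $[E_i,E_j]\theta_2=(\omega_{ij}^k-\omega_{ji}^k)E_k(\theta_2)$. The aim is to imitate the determinant argument of Theorem \ref{thm:clascons}: after eliminating $h_{12}^3$ and the derivative terms between these equations, one should be left with the condition that a product of three sines vanishes, the factors being $\sin(\theta_1+\theta_2-2\theta_3)$, $\sin(\theta_1+\theta_3-2\theta_2)$ and $\sin(\theta_2+\theta_3-2\theta_1)$.

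Finally I would rule out the first two factors. If $\sin(\theta_1+\theta_2-2\theta_3)\equiv 0$ on an open set, then, using $\theta_1$ constant and $\theta_3=\mathrm{const}-\theta_2$, one gets $3\theta_2\equiv\mathrm{const}\pmod{\pi}$, so $\theta_2$ is locally constant by continuity, a contradiction; the same computation excludes $\sin(\theta_1+\theta_3-2\theta_2)$. Hence $\sin(\theta_2+\theta_3-2\theta_1)=0$, and since $\theta_2+\theta_3-2\theta_1=k\pi-3\theta_1$ this reads $\sin(3\theta_1)=0$, i.e. $\theta_1\equiv 0,\frac{\pi}{3}$ or $\frac{2\pi}{3}\pmod{\pi}$ (the three values being on equal footing by the metric-preserving constructions of Theorems \ref{thm:comp1} and \ref{thm:comp2}). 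I expect the third step to be the main obstacle: because the two varying angles introduce many nonzero components $h_{22}^i,h_{33}^i$ together with their derivatives, carefully organizing the Codazzi and Gauss equations and eliminating these derivative terms, so that the clean product-of-sines condition survives, is considerably more delicate than in the all-constant case of Theorem \ref{thm:clascons}.
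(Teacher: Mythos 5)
Your setup is sound and matches the paper's: you correctly observe that $M$ cannot be totally geodesic, that $\theta_2+\theta_3$ is constant so that a single non-constant function (the paper's $\Lambda$, essentially $\theta_2-\theta_3$) carries all the variation, that \eqref{derv1} kills $h_{11}^i$ and forces $h_{22}^i=-h_{33}^i$, and that the target identity is $\sin(3\theta_1)=0$; your elimination of the two ``wrong'' factors $\sin(\theta_1+\theta_2-2\theta_3)$ and $\sin(\theta_1+\theta_3-2\theta_2)$ (each would force $\theta_2$ constant) is also fine. The gap is in the step you yourself flag as the main obstacle, and it is not merely a matter of delicacy: the determinant argument of Theorem \ref{thm:clascons} does \emph{not} survive the presence of the derivative terms, and the elimination does \emph{not} produce a product of three sines in the angles alone. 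What the Codazzi equation \eqref{codazzi} actually yields (after computing $E_1(h_{13}^3)$ and substituting back) is a relation of the form
\begin{equation*}
\sin (3 c)\, \csc \Bigl(\tfrac{3 c}{2}-\Lambda \Bigr) \csc \Bigl(\Lambda +\tfrac{3 c}{2}\Bigr) \Bigl(\cos (4 \Lambda)-2 \cos (2 \Lambda ) \cos (3 c)+12 (h_{12}^3)^2+12 (h_{13}^3)^2+1\Bigr)=0,
\end{equation*}
where $2\theta_1=2c$. The second factor is not a trigonometric expression in the angles: it involves the surviving components $h_{12}^3$ and $h_{13}^3$ of the second fundamental form, and since $\cos(4\Lambda)-2\cos(2\Lambda)\cos(3c)+1$ can be negative it is not manifestly nonzero. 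So the dichotomy you actually face is $\sin(3c)=0$ \emph{or} the vanishing of this mixed factor, and the latter branch cannot be dismissed by the ``$\Lambda$ would be constant'' argument you use for the sine factors.

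Excluding that branch is where the real work of the proof lies, and your proposal contains no mechanism for it. The paper must differentiate the constraint along $E_1$, split according to whether $h_{13}^3$ vanishes, and in each subcase bring in further Codazzi identities and finally the Gauss equation \eqref{Gauss2} (comparing the sectional curvature computed intrinsically with the Gauss expression) to solve for $h_{12}^3$ in two incompatible ways, reaching a contradiction only because $\Lambda$ is non-constant. Without an argument of this kind your proof establishes only the weaker statement that \emph{either} $\sin(3\theta_1)=0$ \emph{or} a certain algebraic relation holds among $\Lambda$, $h_{12}^3$ and $h_{13}^3$; the theorem is not proved until that second alternative is eliminated. A secondary, harmless difference: the paper extracts the structure equations from the covariant derivatives \eqref{opA}--\eqref{opB} of $A$ and $B$ rather than from \eqref{derv1}--\eqref{derv2} alone, which also produces the needed expressions for the $\omega_{ij}^k$ with derivative terms; your route via \eqref{derv1}--\eqref{derv2} would give equivalent information.
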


\begin{proof}
First, we may denote the three angle functions by
\begin{align*}
&2\theta_1=2 c,\\
&2\theta_2= 2\Lambda-c,\\
&2\theta_3= -2\Lambda-c,
\end{align*} where $c\in \R$ and $\Lambda$ is some non constant function. Then, we may write the conditions following from the minimality of the  Lagrangian immersion: 
\begin{align}
\label{minimality}
\begin{array}{l}
h_{11}^1+h_{12}^2+h_{13}^3=0,\\
h_{11}^2+h_{22}^2+h_{23}^3=0,\\
h_{11}^3+h_{22}^3+h_{33}^3=0.
\end{array}
\end{align}
We are going to use the  definitions of $\nabla A$ and $\nabla B$  in \eqref{opA} and \eqref{opB} and then evaluate these relations for different vectors in the basis  in order to get information about the functions $\omega_{ij}^k$ and  $h_{ij}^k$. For $X=Y=E_1$ in \eqref{opA} we obtain that 
\begin{align}
&h_{12}^2=-h_{13}^3,\nonumber\\
&\omega_{11}^2=-\frac{h_{11}^2 (\cos (c-2 \Lambda )+\cos (2 c))}{\sin (c-2 \Lambda )+\sin (2 c)},\label{55}\\
&\omega_{11}^3=-\frac{h_{11}^3 (\sin (2 c)-\sin (2 \Lambda +c))}{\cos (2 c)-\cos (2 \Lambda +c)}.\nonumber
\end{align}
If we take $X=E_1$ and $Y=E_2$ in \eqref{opA} and \eqref{opB}, we see that 
\begin{align}
&E_1(\Lambda)=h_{13}^3,\\
&\omega_{12}^3=\frac{\sqrt{3}}{6}-h_{12}^3\cot{2\Lambda}
\end{align}
and, for $X=E_2$ and $Y=E_1$ in \eqref{opA}, we obtain
\begin{align}
h_{11}^2&=0,\label{h2}\\
\omega_{21}^2&=-\frac{h_{13}^3 (\sin (2 c)-\sin (c-2 \Lambda ))}{\cos (2 c)-\cos (c-2 \Lambda )}\\
\omega_{21}^3&= -h_{12}^3 \cot \left(\Lambda +\frac{3 c}{2}\right)-\frac{\sqrt{3}}{6}.
\end{align}
Then we choose successively $X=E_3,Y=E_1$, $X=E_2,Y=E_3$ and $X=E_3,Y=E_2$    in relations \eqref{opA} and \eqref{opB}
and obtain
\begin{align}
h_{11}^3&=0,\label{h3}\\
\omega_{31}^2&=\frac{\sqrt{3}}{6} - h_{12}^3 \cot \left(\frac{3 c}{2}-\Lambda \right),\\
\omega_{31}^3&=\frac{h_{13}^3 (\sin (2 c)-\sin (2 \Lambda +c))}{\cos (2 c)-\cos (2 \Lambda +c)},\\
\omega_{22}^3&=-\cot 2\Lambda h_{22}^3,\\
\omega_{32}^3&=-\cot 2\Lambda h_{23}^3\\
E_2(\Lambda)&=h_{23}^3,\label{dere2}\\
E_3(\Lambda)&=-h_{22}^3\label{dere3}.
\end{align}
We can easily see  from \eqref{55}, \eqref{h2} and \eqref{h3} that 
$$
\omega_{11}^2=0\quad \text{and} \quad \omega_{11}^3=0, $$
and if we consider, as well, the relations in \eqref{minimality}, we have that
$$
h_{33}^3=-h_{22}^3,\quad h_{11}^1=0\quad  \text{and} \quad h_{22}^2=-h_{23 }^3.
$$
Next, we are going to use the definition for $\nabla h$ in \eqref{codazzi} and take different values for the vectors $X,Y$ and $Z$. Thus, we evaluate it for $E_1,E_2,E_1$ and $E_1,E_3,E_1$. Looking at the component in $E_3$ of the resulting two vectors, we obtain the following relations, respectively:
\begin{align*}
E_1(h_{12}^3)&=\frac{h_{13}^3}{\sqrt{3}}-2 h_{12}^3 h_{13}^3 \left(\cot (2 \Lambda )+\frac{2 \sin (2 \Lambda )}{\cos (2 \Lambda )-\cos (3 c)}\right), \\
E_1(h_{13}^3)&=\frac{1}{3} \Big(\cot \left(\Lambda +\frac{3 c}{2}\right)\big( 1-\cos (2 \Lambda +3 c)+6
   (h_{12}^3)^2 + 6 (h_{13}^3)^2 )+6 (h_{12}^3)^2 \cot (2 \Lambda )-\sqrt{3} h_{12}^3\Big).
\end{align*}
Taking again $X=E_1,Y=E_2,Z=E_1$ in \eqref{codazzi} as just done previously, we look at the component of $E_2$ this time, after replacing  $E_1(h_{13}^3)$ from the above equations, and we get that 
$$
\sin (3 c) \csc \left(\frac{3 c}{2}-\Lambda \right) \csc \left(\Lambda +\frac{3 c}{2}\right) \left(\cos (4 \Lambda
   )-2 \cos (2 \Lambda ) \cos (3 c)+12 (h_{12}^3)^2+12 (h_{13}^3)^2+1\right)=0.
$$
As  $\Lambda$ is not constant, this implies that  $\cos (4 \Lambda
   )-2 \cos (2 \Lambda ) \cos (3 c)+12 (h_{12}^3)^2+12 (h_{13}^3)^2+1=0$ or  $\sin(3c)=0$. \\
\emph{Case 1.} $\sin(3c)=0.$ 
In this case, considering that $\theta_1\in[0,\pi]$, it is straightforward to see that $c\in\{ 0,\frac{\pi}{3},\frac{2\pi}{3}\}$.\\In the following we will show that the other case cannot occur.\\
\hfill\break
\emph{Case 2.} $\sin(3c)\neq 0$.  It follows that  
\begin{equation}\label{zero}
\cos (4 \Lambda
   )-2 \cos (2 \Lambda ) \cos (3 c)+12 (h_{12}^3)^2+12 (h_{13}^3)^2+1=0,
\end{equation}
and, therefore, its derivative with respect to $E_1$ vanishes too:
$$
h_{13}^3 \left(\sin (4 \Lambda )-2 \cos (2 \Lambda ) \sin (3 c)-3 \sin (2 \Lambda ) \cos (3 c)-12
   \left((h_{12}^3)^2+(h_{13}^3)^2\right) \cot \left(\Lambda +\frac{3 c}{2}\right)\right)=0.
$$ 
We must split again into two cases.\\
\emph{Case 2.1.}\ $h_{13}^3\neq0.$
We have, of course, that
$$
\sin (4 \Lambda )-2 \cos (2 \Lambda ) \sin (3 c)-3 \sin (2 \Lambda ) \cos (3 c)-12
   \left((h_{12}^3)^2+(h_{13}^3)^2\right) \cot \left(\Lambda +\frac{3 c}{2}\right)=0
$$ and by \eqref{zero}, we may write 
\begin{multline}
\left(\cos (4 \Lambda
   )-2 \cos (2 \Lambda ) \cos (3 c)+12 (h_{12}^3)^2+12 (h_{13}^3)^2+1\right)\cot\left(\frac{3c}{2}-\Lambda \right)-\\
-\left(\sin (4 \Lambda )-2 \cos (2 \Lambda ) \sin (3 c)-3 \sin (2 \Lambda ) \cos (3 c)-12
   \left((h_{12}^3)^2+(h_{13}^3)^2\right) \cot \left(\Lambda +\frac{3 c}{2}\right)\right)=0.
\end{multline}
The latter equation reduces to $-3\cos(3c)\sin(2\Lambda)=0$, which implies $\cos(3c)=0$.
With this information, we evaluate \eqref{codazzi} for $E_1,E_2,E_1$ and, looking at the component of $E_2$ of the resulting vector gives
$$
\sin (3 c) \csc \left(\frac{3 c}{2}-\Lambda \right) \csc \left(\Lambda +\frac{3 c}{2}\right) \left(\cos (4 \Lambda
   )-2 \cos (2 \Lambda ) \cos (3 c)+12 (h_{12}^3)^2+12 (h_{13}^3)^2+1\right)=0.
$$
This yields $\cos (4 \Lambda )+12 (h_{12}^3)^2+12 (h_{13}^3)^2+1=0,$
which is a contradiction, as, given that $\Lambda$ is not constant,  the expression is actually strictly greater than $0$.\\

\emph{Case 2.2}\ $h_{13}^3=0.$
From \eqref{codazzi} evaluated for $E_1,E_2,E_2$; $E_1,E_3,E_3$; $E_2,E_3,E_3$; $E_3,E_2,E_2$, by looking at the components of  $E_2,E_3; E_3, E_2;E_3;E_3$, we obtain, respectively: 
\begin{align}
E_1(h_{23}^3)=&-h_{12}^3 h_{22}^3 \cot \left(\Lambda +\frac{3 c}{2}\right)+h_{12}^3 h_{22}^3 \cot (2 \Lambda
   )-\frac{h_{22}^3}{\sqrt{3}},\nonumber\\
E_2(h_{12}^3)=&-h_{12}^3 h_{23}^3 \left(-2 \cot (2 \Lambda )+\cot \left(\frac{3 c}{2}-\Lambda \right)+\cot \left(\Lambda +\frac{3
   c}{2}\right)\right),\nonumber\\
0=&-\frac{1}{3} \sin (2 \Lambda +3c)-2 (h_{12}^3)^2
   \left(\cot (2 \Lambda )+\cot \left(\frac{3
  c}{2}-\Lambda
   \right)\right)+\frac{h_{12}^3}{\sqrt{3}},\label{x}\\
E_1(h_{22}^3)=&\frac{1}{3} h_{23}^3 \left(\sqrt{3}-3 h_{12}^3 \left(\cot (2 \Lambda )+\cot \left(\frac{3 c}{2}-\Lambda \right)\right)\right),\nonumber\\
E_3(h_{12}^3)=&-h_{12}^3 h_{22}^3 \left(2 \cot (2 \Lambda )+\cot \left(\frac{3 c}{2}-\Lambda \right)+\cot \left(\Lambda +\frac{3
   c}{2}\right)\right),\nonumber\\
E_2(h_{22}^3) =&-E_3(h_{23}^3),\nonumber\\
E_3(h_{22}^3)=&\frac{1}{3} \Big(-\sin (4 \Lambda )-6 (h_{12}^3)^2 \left(\cot \left(\Lambda +\frac{3 c}{2}\right)-\cot \left(\frac{3
   c}{2}-\Lambda \right)\right)+3 E_2(h_{23}^3)-\sqrt{3} h_{12}^3- \Big.\nonumber \\ 
&\Big.-9 \cot (2 \Lambda )\left((h_{22}^3)^2+(h_{23}^3)^2\right)\Big).\nonumber
\end{align}
Next, for the vector fields $E_1,E_2,E_1,E_2$, we may evaluate the sectional  curvature  once using the definition for the curvature tensor,  once using \eqref{Gauss2},  and subtract the results. This gives
\begin{multline}\label{e69}
-\sin (2c) \sin (c-2
   \Lambda )+\cos (2 c) \cos (c-2 \Lambda )-\\
-6  (h_{12}^3)^2 \csc (2 \Lambda ) \cos \left(\frac{3
   c}{2}-\Lambda \right) \csc \left(\Lambda +\frac{3
   c}{2}\right)+\sqrt{3} h_{12}^3 \cot
   \left(\frac{3 c}{2}-\Lambda \right)+1=0.
\end{multline}
From \eqref{x}, we obtain 
\begin{equation}\label{xx}
(h_{12}^3)^2=\frac{\sqrt{3} h_{12}^3-\sin (2 \Lambda +3 c)}{6
   \left(\cot (2 \Lambda )+\cot \left(\frac{3
   c}{2}-\Lambda \right)\right)},
\end{equation} so that  we may replace $(h_{12}^3)^2$ in \eqref{e69} and solve for $h_{12}^3$:
\begin{equation} \label{xxx}
h_{12}^3=\frac{(\cos(3c)-\cos(2\Lambda))\csc(2\Lambda)}{\sqrt{3}}.
\end{equation}
Nevertheless, $(h_{12}^3)^2$ from \eqref{xxx}  does not coincide with \eqref{xx}, as it would imply 
$$\csc ^2(2 \Lambda ) (\cos (3 c)-\cos (2 \Lambda )) (-9
   \cos (2 \Lambda )+\cos (6 \Lambda )+8 \cos (3 c))=0,$$
i.e. $\Lambda$ should be constant, which is a contradiction.
\end{proof}

A complete  classification of the Lagrangian submanifolds with $\theta_1=\frac{\pi}{3}$ is given in \cite{eu}. Similarly, for those with angle functions $\theta_1=0$ or $\theta_1= \frac{2\pi}{3}$, we obtain the same result  by constructions $\tilde{f}$ and $f^*$, respectively. We recall that the following theorems are proven in \cite{eu}:

\begin{theorem}\label{t1}
Let $\omega$ and $\mu$ be solutions of, respectively, the Sinh-Gordon equation $\Delta\omega=-8\sinh\omega$ and the Liouville equation  $\Delta\mu=-e^\mu$ on an open simply connected domain $U \subseteq\mathbb{C}$ and let $p:U\rightarrow\mathbb{S}^3$ be the associated minimal surface with complex coordinate $z$ such that $\sigma(\partial z,\partial z)=-1.$\\
Let $V=\{(z,t)\mid z\in U, t\in \mathbb{R}, e^{\omega+\mu}-2-2\cos(4t)>0 \}$ and let $\Lambda$ be a solution of $$\left(\frac{2\sqrt{3}e^\omega}{\tan\Lambda}-2\sin(2t) \right)= e^{\omega+\mu}-2-2\cos(4t)$$ on $V$. Then, there exists a Lagrangian immersion $f:V\rightarrow \nks : x\mapsto (p(x),q(x))$, where $q$ is determined by 
\begin{align*}
\frac{\partial q}{\partial t}=& -\frac{\sqrt{3} }{2 \sqrt{3} e^{\omega }-2 \sin
   (2t) \tan \Lambda }\ q\ \alpha_2 \times \alpha_3, \\
\frac{\partial q}{\partial u}=& \frac{1}{8} \Bigl(e^{-\omega } \left( {\mu_v}+ {\omega_v}-\frac{( {\mu_u}+ {\omega_u}) \cos
   (2 t) \tan \Lambda }{\sqrt{3} e^{\omega }
  -\sin (2 t) \tan \Lambda }\right)q \ \alpha_2 \times \alpha_3  - 4 (\sqrt{3} \cot \Lambda \cos (2 t)+1)\ q\  \alpha_2 - \Bigr. \\ 
& \Bigl. -4 \sqrt{3}\sin (2 t) \cot\Lambda \ q\ \alpha_3   \Bigr), \\
\frac{\partial q}{\partial v}=& \frac{1}{8} \Bigl(-e^{-\omega } \left( {\mu_u}+ {\omega_u}+\frac{( {\mu_v}+ {\omega_v}) \cos
   (2 t) \tan\Lambda }{\sqrt{3} e^{\omega }
  -\sin (2 t) \tan \Lambda }\right) \ q\ \alpha_2 \times \alpha_3  - 4 \sqrt{3} \cot \Lambda \sin (2 t)\  q\ \alpha_2+ \Bigr. \\ &\Bigl.  +4(1+ \sqrt{3}\cos (2 t) \cot\Lambda)\ q \ \alpha_3   \Big),
\end{align*}
where $\alpha_2=\bar p p_u$ and $\alpha_3=\bar p p_v$.
\end{theorem}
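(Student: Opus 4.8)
The plan is to read the statement as an existence-by-integration result: the first component $p$ is prescribed, so the whole content is that the second component $q$ can be manufactured by integrating the displayed first-order system and that the resulting map $f=(p,q)$ is a Lagrangian immersion. The key structural observation is that each of the three equations expresses $q_t$, $q_u$, $q_v$ as $q$ times a purely imaginary quaternion assembled from $\alpha_2$, $\alpha_3$ and $\alpha_2\times\alpha_3$; hence the system has the form $q^{-1}\,dq=\Theta$ for an $\operatorname{Im}\H$-valued $1$-form $\Theta$ on $V$. The proof therefore splits into (a) checking that $\Theta$ is well defined from the given data, (b) verifying the zero-curvature condition making the system Frobenius-integrable, (c) integrating on the simply connected $V$ and noting that $|q|\equiv 1$, and (d) checking that $f=(p,q)$ is an immersion, is Lagrangian, and has $\theta_1=\tfrac{\pi}{3}$.

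First I would fix the moving frame of the minimal surface. Since $z$ is conformal one has $|p_u|^2=|p_v|^2$ and $\langle p_u,p_v\rangle=0$, so after setting $\alpha_2=\bar p\,p_u$ and $\alpha_3=\bar p\,p_v$ the triple $\{\alpha_2,\alpha_3,\alpha_2\times\alpha_3\}$ is an orthogonal frame of $\operatorname{Im}\H$ whose common scale I identify with $e^{\omega}$. Minimality together with the normalization $\sigma(\partial z,\partial z)=-1$ pins down the second fundamental form, and I would record the structure equations, namely the expressions for $(\alpha_2)_u,(\alpha_2)_v,(\alpha_3)_u,(\alpha_3)_v$ and $(\alpha_2\times\alpha_3)_u,(\alpha_2\times\alpha_3)_v$ in this frame. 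The Codazzi equation is automatic because the Hopf differential is the constant $-dz^2$, while the Gauss equation of $p$ is precisely the Sinh-Gordon equation $\Delta\omega=-8\sinh\omega$; this is where that hypothesis is consumed.

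The heart of the argument is the integrability check in step (b), and this is where I expect the main obstacle. Writing $q_a=q\,\eta_a$ for $a\in\{t,u,v\}$, commutativity of the mixed partials of $q$ is equivalent to the three identities
\begin{equation*}
(\eta_a)_b-(\eta_b)_a=\eta_b\,\eta_a-\eta_a\,\eta_b,\qquad a,b\in\{t,u,v\}.
\end{equation*}
I would expand each identity in the frame of the previous paragraph, using the derivatives of $\Lambda$ obtained by differentiating the defining relation
\begin{equation*}
\frac{2\sqrt3\,e^{\omega}}{\tan\Lambda}-2\sin(2t)=e^{\omega+\mu}-2-2\cos(4t)
\end{equation*}
with respect to $t$, $u$ and $v$. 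After substitution, the second-order quantities $\Delta\omega$ and $\Delta\mu$ that emerge are eliminated by the Sinh-Gordon and the Liouville equations respectively, which is exactly where the two analytic hypotheses are used; the remaining terms cancel identically. This is a long but essentially mechanical computation in $\operatorname{Im}\H$, and the only real subtlety is organizing the trigonometric expressions in $t$ and $\Lambda$ so that the two substitutions produce the required cancellations.

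Once integrability holds, the Frobenius theorem on the simply connected domain $V$ yields a solution $q\colon V\to\H$, unique up to a constant left translation. Since each $q^{-1}q_a=\eta_a$ is purely imaginary, $\partial_a|q|^2=2\langle q,q\eta_a\rangle=0$, so choosing the initial value on the unit sphere gives $q\colon V\to\S^3$. To finish I would check that $df(\partial_t),df(\partial_u),df(\partial_v)$ are linearly independent, so that $f$ is an immersion, and then verify the Lagrangian condition in the form $\langle\alpha_i,\beta_j\rangle=\langle\beta_i,\alpha_j\rangle$ established in the proof of Theorem \ref{thm:comp1}. For the $t$-direction this is immediate, because its first component vanishes while its second component is parallel to $\alpha_2\times\alpha_3$ and hence orthogonal to $\alpha_2,\alpha_3$; the single mixed relation in the $u,v$-directions follows by reading off the $\alpha_2$- and $\alpha_3$-coefficients of $\eta_u,\eta_v$, which are manifestly symmetric. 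Finally, because $p$ is independent of $t$ the first component has rank two everywhere, so by Theorem~1 of \cite{eu} the angle function satisfies $\theta_1=\tfrac{\pi}{3}$, confirming that $f$ is a Lagrangian immersion of the asserted type.
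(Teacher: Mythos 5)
A preliminary remark on the comparison itself: the paper contains no proof of Theorem \ref{t1} --- it is explicitly recalled from \cite{eu} (``We recall that the following theorems are proven in \cite{eu}''), so there is no in-paper argument to measure yours against. On its merits, your reconstruction is the natural and almost certainly the intended route: read the displayed system as $q^{-1}dq=\Theta$ for an $\operatorname{Im}\mathbb{H}$-valued $1$-form built on the frame $\{\alpha_2,\alpha_3,\alpha_2\times\alpha_3\}$, check flatness using the structure equations of the minimal surface $p$ (where $\sigma(\partial z,\partial z)=-1$ fixes the Hopf differential and the Gauss equation is Sinh-Gordon), use implicit differentiation of the relation defining $\Lambda$ to generate its derivatives, eliminate $\Delta\omega$ and $\Delta\mu$ by the two PDE hypotheses, integrate, and verify $|q|\equiv 1$, the immersion property, and the Lagrangian condition in the form $\langle\alpha_i,\beta_j\rangle=\langle\beta_i,\alpha_j\rangle$ from the proof of Theorem \ref{thm:comp1}. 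Your closing checks are correct: the $t$-direction is automatic since its first component vanishes and $\beta_t\parallel\alpha_2\times\alpha_3$, and the $(u,v)$ relation holds because the $\alpha_3$-coefficient of $\eta_u$ and the $\alpha_2$-coefficient of $\eta_v$ are both $-\tfrac{\sqrt{3}}{2}\sin(2t)\cot\Lambda$ (together with conformality, $|\alpha_2|=|\alpha_3|$, $\alpha_2\perp\alpha_3$).

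Two genuine flaws remain, one of them in your key displayed identity. Writing $q_a=q\eta_a$, one has $q_{ab}=q\bigl(\eta_b\eta_a+(\eta_a)_b\bigr)$, so equality of mixed partials is equivalent to $(\eta_a)_b-(\eta_b)_a=\eta_a\eta_b-\eta_b\eta_a=[\eta_a,\eta_b]$; your display carries the commutator with the opposite sign. Since the entire analytic content of the theorem is precisely this cancellation --- which you assert (``the remaining terms cancel identically'') rather than carry out --- a sign error in the zero-curvature condition is not cosmetic: with your version the computation would not close, and as written the proposal only establishes the (correct) bookkeeping in your steps (a), (c), (d) while leaving the crux, where $\Delta\omega=-8\sinh\omega$ and $\Delta\mu=-e^{\mu}$ are actually consumed, unverified. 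Secondly, you invoke ``the Frobenius theorem on the simply connected domain $V$'', but only $U$ is assumed simply connected: $V$ is cut out of $U\times\mathbb{R}$ by the open condition $e^{\omega+\mu}-2-2\cos(4t)>0$ and need not be simply connected (nor connected), so the integration should be carried out locally or on simply connected components, which is all that the conclusion requires but should be said. Fix the sign, and either perform the curvature computation or at least exhibit its structure (which frame coefficients produce $\Delta\omega$, and how $\mu$ enters through the second derivatives of the $\Lambda$-relation), and the argument would be complete.
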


\begin{theorem}\label{t2}
Let $X_1,X_2,X_3$ be the standard vector fields on $\mathbb{S}^3$. Let $\beta$ be a solution of the differential equations 
\begin{align*}
&X_1(\beta)=0,\\
&X_2(X_2(\beta))+X_3(X_3(\beta))=\frac{2(3-e^{4\beta})}{e^{4\beta}},
\end{align*}
on a connected, simply connected open subset $U$ of $\mathbb{S}^3$.\\
 Then there exist a Lagrangian immersion $f:U\rightarrow \nks : x\mapsto (p(x),q(x))$, where $p(x)=xix^{-1}$ and $q$ is determined by
\begin{align*}
\begin{array}{l}
X_1(q)=-2qhxix^{-1}h^{-1},\\
X_2(q)=q\left(-X_3(\beta)  hxix^{-1}h^{-1}-(1-\sqrt{3} e^{-2\beta})\ hxjx^{-1}h^{-1}\right),\\
X_3(q)=q\left( X_2(\beta)\ hxix^{-1}h^{-1} -(1+\sqrt{3} e^{-2\beta}) \ hxkx^{-1}h^{-1}\right).
\end{array}
\end{align*}
\end{theorem}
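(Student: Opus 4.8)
The plan is to establish the statement as an existence result: the function $\beta$ and the data it determines are the input, and one must produce the second component $q$ and then check that $f=(p,q)$ is a Lagrangian immersion with $\theta_1$ constant equal to $\tfrac{\pi}{3}$. The first component $p(x)=xix^{-1}$ is fixed and is the conjugation (Hopf-type) submersion $\mathbb{S}^3\to S^2\subset\mathbb{S}^3$; a direct computation gives $X_1(p)=0$, $X_2(p)=-2xkx^{-1}$ and $X_3(p)=-2xjx^{-1}$, so $p$ has constant rank $2$ and its differential annihilates $X_1$. Everything therefore hinges on $q$. The three displayed equations have the form $X_i(q)=q\,\Omega_i$ with $\Omega_i\in\mathrm{Im}\,\mathbb{H}$ the explicit right-hand sides (each a conjugate $h(\,\cdot\,)h^{-1}$ of an element of the orthonormal frame $xix^{-1},xjx^{-1},xkx^{-1}$), so I would first treat them as one overdetermined first order system and integrate it by the Frobenius theorem on the simply connected domain $U$.

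To apply Frobenius I would write out the integrability conditions. Differentiating $X_i(q)=q\,\Omega_i$ a second time and using the bracket relations $[X_i,X_j]=-2\varepsilon_{ijk}X_k$ inherited from a single factor (these are the relations for $\tilde E_i$ recorded in Section \ref{sec2}), the equality of mixed derivatives becomes
\begin{equation*}
X_i(\Omega_j)-X_j(\Omega_i)+\Omega_i\Omega_j-\Omega_j\Omega_i+2\varepsilon_{ijk}\Omega_k=0,\qquad 1\le i<j\le 3 .
\end{equation*}
I would substitute the explicit $\Omega_i$ and expand, computing $X_\ell$ of the conjugated quaternions by means of the formulas above for $X_\ell(p)$ together with the quaternion product. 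I expect the $(1,2)$ and $(1,3)$ conditions to reduce to $X_1(\beta)=0$ (and to an ODE along the $X_1$-fibres that pins down the auxiliary factor $h$, which the statement leaves implicit), while the $(2,3)$ condition, once $X_1(\beta)=0$ is used, produces $X_2(X_2(\beta))+X_3(X_3(\beta))$ from the terms $X_2(\beta),-X_3(\beta)$ sitting in $\Omega_3,\Omega_2$, and the commutator $\Omega_2\Omega_3-\Omega_3\Omega_2$ together with $2\Omega_1$ contributes the right-hand side $\tfrac{2(3-e^{4\beta})}{e^{4\beta}}$; thus the $(2,3)$ condition is exactly the assumed second order equation. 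With integrability verified, $q$ exists and is unique up to left multiplication by a fixed unit quaternion, which is an isometry of $\nks$ preserving $J$, so the immersion is well defined up to congruence.

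It then remains to verify that $f=(p,q)$ is a Lagrangian immersion and to read off the angles. Writing $X_i(p)=p\,\alpha_i$ and $X_i(q)=q\,\beta_i$ with $\alpha_i,\beta_i\in\mathrm{Im}\,\mathbb{H}$, the immersion property is immediate: the vectors $df(X_2),df(X_3)$ already have independent first components $X_2(p),X_3(p)$, while $df(X_1)=(0,X_1(q))$ with $X_1(q)\ne 0$ supplies the direction killed by $dp$, so the three differentials are independent. The Lagrangian condition is precisely $\langle\alpha_i,\beta_j\rangle-\langle\beta_i,\alpha_j\rangle=0$ for $i\ne j$, the criterion isolated in the proof of Theorem \ref{thm:comp1}, which I would check from the explicit $\alpha_i,\beta_i$. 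Finally, computing $Pdf(X_i)$ from \eqref{Ponimage} and diagonalising $A,B$ yields the angle functions; in the parametrisation $2\theta_1=2c$, $2\theta_2=2\Lambda-c$, $2\theta_3=-2\Lambda-c$ used earlier one reads off $c=\tfrac{\pi}{3}$, so $\theta_1=\tfrac{\pi}{3}$ is the constant angle forced by the nowhere maximal rank of $p$, and $\Lambda$ is related to $\beta$ through the factor $e^{-2\beta}$ occurring in $\Omega_2,\Omega_3$.

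The main obstacle is the middle step. The delicate point is to show that the three quaternionic integrability identities collapse to exactly the two scalar conditions $X_1(\beta)=0$ and $X_2(X_2(\beta))+X_3(X_3(\beta))=\tfrac{2(3-e^{4\beta})}{e^{4\beta}}$, with no further constraint on $\beta$ or $h$, and in particular that the genuinely second order information appears only through the $(2,3)$ bracket. This rests on a careful and lengthy computation of $X_\ell$ applied to conjugated quaternions and on correct bookkeeping of the $e^{-2\beta}$ and $e^{4\beta}$ terms (note $(1-\sqrt3\,e^{-2\beta})(1+\sqrt3\,e^{-2\beta})=1-3e^{-4\beta}$ is what generates the $3-e^{4\beta}$ numerator), as well as on fixing the implicit factor $h$ and keeping all sign conventions for the brackets, for $J$ and for $P$ consistent throughout.
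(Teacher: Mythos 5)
There is an important mismatch here that you should be aware of before anything else: the paper does not prove this theorem. It is one of the results explicitly ``recalled'' from the preprint \cite{eu} (``We recall that the following theorems are proven in \cite{eu}''), so there is no in-paper proof to measure your attempt against. Judged on its own merits, your strategy is the natural one and almost certainly the shape of the argument in \cite{eu}: you correctly compute $X_1(p)=0$, $X_2(p)=-2xkx^{-1}$, $X_3(p)=-2xjx^{-1}$ for $p(x)=xix^{-1}$, you correctly derive the Frobenius compatibility condition $X_i(\Omega_j)-X_j(\Omega_i)+[\Omega_i,\Omega_j]+2\varepsilon_{ijk}\Omega_k=0$ for the overdetermined system $X_i(q)=q\,\Omega_i$ using $[X_i,X_j]=-2\varepsilon_{ijk}X_k$, and you correctly isolate $\langle\alpha_i,\beta_j\rangle-\langle\beta_i,\alpha_j\rangle=0$ as the Lagrangian criterion from the proof of Theorem \ref{thm:comp1}.

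The genuine gap is that the decisive step is only announced, not performed. You write that you ``expect'' the $(1,2)$ and $(1,3)$ conditions to reduce to $X_1(\beta)=0$ and the $(2,3)$ condition to reproduce $X_2(X_2(\beta))+X_3(X_3(\beta))=\tfrac{2(3-e^{4\beta})}{e^{4\beta}}$, but this is precisely the content of the theorem and cannot be taken on faith; the whole burden of proof sits in that expansion. Worse, the computation cannot even be started as the statement stands, because the quaternion $h$ appearing in all three right-hand sides is never defined (neither in the statement nor in your proposal), so the forms $\Omega_i$ whose integrability you must check are not determined; your suggestion that $h$ is ``pinned down by an ODE along the $X_1$-fibres'' extracted from the $(1,j)$ conditions is a plausible guess, but then you must actually exhibit that ODE, prove it is solvable, and verify that the remaining conditions hold for its solutions---otherwise the existence claim is circular. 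Finally, the verification that $f=(p,q)$ is an immersion, is Lagrangian, and has $\theta_1=\tfrac{\pi}{3}$ is likewise deferred to unexecuted computations. In short: right scaffolding, but the proof itself is missing.
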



\begin{theorem}\label{t3}
Let $\omega$ be a solution of the Sinh-Gordon equation $\Delta\omega=-8\sinh\omega$ on an open connected domain of $U$ in $\mathbb{C}$ and let $p:U\rightarrow \mathbb{S}^3$ be the associated minimal surface with complex coordinate $z$ such that $\sigma(\partial z,\partial z)=-1.$ Then, there exist a Lagrangian immersion $f:U\times \mathbb{R}\rightarrow\nks:x\mapsto (p(x),q(x)) $, where $q$ is determined by 
\begin{align*}
&\frac{\partial q}{\partial t}=-\frac{\sqrt{3}e^{-\omega}}{4}q \ \alpha_2\times\alpha_3,\\
&\frac{\partial q}{\partial u}=\frac{e^{-\omega}}{8}(4e^{\omega}q \alpha_2-4  q \alpha_3+\omega_v q\ \alpha_2\times\alpha_3),\\
&\frac{\partial q}{\partial v}=-\frac{e^{-\omega}}{8}(4 q \alpha_2-4e^{\omega}q \alpha_3+\omega_u  q\ \alpha_2\times\alpha_3).
\end{align*}
where $\alpha_2=\bar p p_u$ and $\alpha_3=\bar p p_v.$
\end{theorem}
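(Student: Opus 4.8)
The goal is to prove that a solution to the explicit ODE system for $q$ (with $p$ the associated minimal surface of the Sinh-Gordon solution $\omega$) exists and assembles into a genuine Lagrangian immersion $f=(p,q):U\times\mathbb{R}\to\nks$. I would organize the argument in three parts: (i) show the prescribed vector fields $\partial q/\partial t$, $\partial q/\partial u$, $\partial q/\partial v$ are integrable, so $q$ exists locally; (ii) verify that the resulting $f$ is an immersion; and (iii) verify that $f$ is Lagrangian, i.e.\ that $Jdf(TM)\perp df(TM)$ with respect to the metric $g$ in \eqref{kahlermetric}.

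\textbf{Integrability.}
First I would observe that each of $\partial_t q$, $\partial_u q$, $\partial_v q$ has the form $q\,\xi$ for an imaginary-quaternion-valued function $\xi$ built from $\omega$, $\alpha_2=\bar p p_u$, $\alpha_3=\bar p p_v$, and $\alpha_2\times\alpha_3$; hence each lies in $T_q\mathbb{S}^3$, so the target $\mathbb{S}^3$ is respected and the system is a left-invariant frame equation $\partial_a q=q\,\xi_a$. The integrability (Frobenius) conditions are the three compatibility equations $\partial_a(q\xi_b)=\partial_b(q\xi_a)$, equivalently $\partial_a\xi_b-\partial_b\xi_a=\xi_a\xi_b-\xi_b\xi_a$ in $\mathrm{Im}\,\mathbb{H}$. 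The key input here is the structure equation of the minimal surface $p$: since $\sigma(\partial z,\partial z)=-1$ and $\omega$ solves $\Delta\omega=-8\sinh\omega$, the frame $\{\alpha_2,\alpha_3,\alpha_2\times\alpha_3\}$ satisfies Gauss--Codazzi relations expressing $\partial_u\alpha_3-\partial_v\alpha_2$ and the derivatives $\partial_u\alpha_2$, $\partial_v\alpha_3$ in terms of $\omega$, $\omega_u$, $\omega_v$ and the frame itself. Substituting these, the three compatibility equations should reduce, after using $\Delta\omega=-8\sinh\omega$, to identities. I expect this to be the \emph{main obstacle}: it is the computation where the Sinh-Gordon equation is actually consumed, and it requires carefully keeping track of the quaternionic cross-product identities (e.g.\ $\alpha_2\times(\alpha_2\times\alpha_3)$) and the normalization $|\alpha_2|^2=|\alpha_3|^2=e^{\omega}$, $\langle\alpha_2,\alpha_3\rangle=0$ coming from the conformal factor.

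\textbf{Immersion and Lagrangian conditions.}
Once $q$ exists, I would set $df(E_i)=(p\alpha_i,q\beta_i)$ after writing the coordinate frame $\partial_t,\partial_u,\partial_v$ in the quaternionic form $(p\alpha_i,q\beta_i)$, reading off $\alpha_i,\beta_i$ directly from the defining equations and from $p$. Immersivity follows by showing these three tangent vectors are linearly independent, which reduces to a nondegeneracy of the induced metric; using \eqref{preserve}, the Gram matrix entries are $\tfrac{2}{3}(2\langle\alpha_i,\alpha_j\rangle+2\langle\beta_i,\beta_j\rangle-\langle\beta_i,\alpha_j\rangle-\langle\alpha_i,\beta_j\rangle)$, and one checks its determinant is nonzero (here the condition defining the domain, keeping $\omega$ finite, guarantees nondegeneracy). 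Finally, for the Lagrangian condition I would use the criterion established in the proof of Theorem \ref{thm:comp1}: $f$ is Lagrangian if and only if $\langle\alpha_i,\beta_j\rangle-\langle\beta_i,\alpha_j\rangle=0$ for all $i\neq j$. Plugging in the explicit $\alpha_i,\beta_i$ and again invoking the orthogonality relations among $\alpha_2,\alpha_3,\alpha_2\times\alpha_3$, these antisymmetric pairings vanish termwise. This last step is essentially bookkeeping once the frame relations are in hand, so the real work is concentrated in the integrability computation.
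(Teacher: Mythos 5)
Note first that this paper contains no proof of Theorem \ref{t3} to compare against: the theorem is recalled from \cite{eu} (``We recall that the following theorems are proven in \cite{eu}''). Your architecture --- Frobenius integrability for the left-invariant system $\partial_a q = q\,\xi_a$, with the sinh-Gordon equation and the structure equations of the minimal surface $p$ consumed in closing the compatibility conditions, followed by the immersion check and the Lagrangian criterion $\langle\alpha_i,\beta_j\rangle-\langle\beta_i,\alpha_j\rangle=0$ borrowed from the proof of Theorem \ref{thm:comp1} --- is indeed the natural and expected route, and your reduction of the Lagrangian condition to termwise vanishing is correct (it uses only $|\alpha_2|=|\alpha_3|$, $\langle\alpha_2,\alpha_3\rangle=0$ and orthogonality of $\alpha_2\times\alpha_3$ to $\alpha_2,\alpha_3$).

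The genuine gap is that the step carrying all the analytic content --- the compatibility computation, which you yourself flag as ``the main obstacle'' --- is only announced, not performed, and two of the inputs you fix for it are wrong, so the verification as set up would fail. First, for $\partial_a q=q\xi_a$ with $\xi_a$ imaginary, equality of mixed partials gives $\partial_a\xi_b-\partial_b\xi_a=\xi_b\xi_a-\xi_a\xi_b=-[\xi_a,\xi_b]$; you wrote the commutator with the opposite sign. Second, your normalization $|\alpha_2|^2=|\alpha_3|^2=e^{\omega}$ is inconsistent with the hypotheses: writing $|p_u|^2=|p_v|^2=E$, minimality and $\sigma(\partial z,\partial z)=-1$ give $\sigma(\partial_u,\partial_u)=-2$, $\sigma(\partial_u,\partial_v)=0$, hence by the Gauss equation $K=1-4/E^2$, while $K=-\tfrac{1}{2E}\Delta\log E$; these reproduce $\Delta\omega=-8\sinh\omega$ only for $E=2e^{\omega}$. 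With $E=e^{\omega}$ the compatibility identities will not close. Two smaller points: the ``condition defining the domain'' you invoke for nondegeneracy belongs to Theorem \ref{t1}, not Theorem \ref{t3}, where no such condition exists --- here immersivity is immediate, since $df(\partial_u),df(\partial_v)$ have linearly independent first components $p\alpha_2,p\alpha_3$, $df(\partial_t)=(0,q\beta_1)$ with $\beta_1=-\tfrac{\sqrt{3}}{4}e^{-\omega}\alpha_2\times\alpha_3\neq 0$, and $g$ is positive definite; and since the theorem asserts existence on all of $U\times\mathbb{R}$, Frobenius must be combined with simple connectivity (as hypothesized in Theorem \ref{t1}), whereas your argument delivers only local existence.
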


\begin{theorem}
Let $f:M\rightarrow \nks : x\mapsto (p(x),q(x))$ be a Lagrangian immersion such that $p$ has nowhere maximal rank. Then every point $x$ of an open dense subset of $M$ has a neighborhood $U$ such that $f|_U$ is obtained as described in Theorem \ref{t1}, \ref{t2} or \ref{t3}.
\end{theorem}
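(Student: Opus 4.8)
The plan is to reduce to the case where one angle function is identically $\tfrac{\pi}{3}$ and then to integrate the resulting structure equations branch by branch. First I would make explicit the link between the rank of $p$ and the angle functions. Writing $df(E_i)=(p\alpha_i,q\beta_i)$ as in \eqref{eq:tangentf}, the relation obtained in the proof of Theorem \ref{thm:comp1},
\begin{equation*}
\Bigl(1+\tfrac{2}{\sqrt3}\sin(2\theta_i)\Bigr)\alpha_i=\Bigl(\cos(2\theta_i)+\tfrac{1}{\sqrt3}\sin(2\theta_i)\Bigr)\beta_i,
\end{equation*}
shows at once that $\theta_i\equiv\tfrac{\pi}{3}\ (\mathrm{mod}\ \pi)$ forces the right-hand coefficient to vanish while the left one does not, so $dp(E_i)=p\alpha_i=0$. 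Hence $p$ has nowhere maximal rank precisely when, after relabelling, $\theta_1\equiv\tfrac{\pi}{3}$ on $M$; this is the content of \cite{eu}, Theorem~1. Lemma \ref{lem:sumzero} then gives $\theta_2+\theta_3\equiv\tfrac{2\pi}{3}$, and since $M$ is neither totally geodesic nor of constant curvature (Lemma \ref{lem:equalangles}) it is natural to put $\theta_2=\tfrac{\pi}{3}+\Lambda$, $\theta_3=\tfrac{\pi}{3}-\Lambda$ with $\Lambda$ nonconstant. As $E_1$ spans $\ker dp$, on the open dense set where the frame $\{E_1,E_2,E_3\}$ is smooth (the extension argument via \cite{szabo} recalled in Section \ref{sec2}) the map $p$ is independent of the $E_1$-direction and factors, with rank two, through an immersed surface $\Sigma\subset\mathbb{S}^3$.

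Next I would extract the geometry of $\Sigma$. Evaluating the covariant-derivative formulas \eqref{opA} and \eqref{opB} together with the Codazzi equation \eqref{codazzi} on the frame $\{E_1,E_2,E_3\}$ yields the connection forms $\omega_{ij}^k$ and the surviving second fundamental form components as functions of $\Lambda$, its derivatives, $c=\tfrac{\pi}{3}$ and $h_{12}^3$ — this is the analogue, with only $\theta_1$ constant, of Lemma \ref{lem:fundeqcons}, and indeed of the computations already carried out in the one-constant-angle theorem. Restricting the Gauss equation \eqref{Gauss2} to the plane field spanned by $E_2,E_3$ then identifies $\Sigma$ as a minimal surface: introducing a conformal coordinate $z=u+iv$ on $\Sigma$ with conformal factor $e^{\omega}$ and normalising the Hopf differential by $\sigma(\partial z,\partial z)=-1$, the Gauss equation becomes the Sinh--Gordon equation $\Delta\omega=-8\sinh\omega$, while the integrability of the $E_1$-direction over $\Sigma$ produces the Liouville equation $\Delta\mu=-e^{\mu}$ for the second potential. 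The fibre coordinate along $E_1$ is the parameter $t$ appearing in Theorems \ref{t1} and \ref{t3}, so that $p=p(z)$ and $q=q(z,t)$.

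Then comes the case analysis, organised exactly as the branching in the proofs of Theorem \ref{thm:clascons} and of the one-constant-angle theorem: one feeds the expressions for $\omega_{ij}^k$ and $h_{ij}^k$ back into the remaining Codazzi and Gauss identities and splits according to the behaviour of $\Lambda$ along $E_1$ (equivalently of the components $h_{13}^3,h_{22}^3,h_{23}^3$). The nondegenerate branch is governed by the full data $(\omega,\mu,\Lambda,t)$ and yields Theorem \ref{t1}; the branch in which the extra potential $\mu$ decouples collapses to a single Sinh--Gordon surface with a free parameter $t\in\mathbb{R}$, giving Theorem \ref{t3}; and the branch in which $\Sigma$ degenerates to a totally geodesic $\mathbb{S}^2$ forces $p$ congruent to $x\mapsto xix^{-1}$, recovering Theorem \ref{t2} with the potential $\beta$ encoding $\Lambda$. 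In each branch the connection and second fundamental form are now fixed, and $q$ is reconstructed by integrating the overdetermined first-order system for $\partial q/\partial t,\partial q/\partial u,\partial q/\partial v$; its Frobenius integrability is precisely the Gauss--Codazzi system \eqref{Gauss2}--\eqref{codazzi}, so a solution exists, is unique up to the left $\mathbb{S}^3$-action, and reproduces the explicit formulas of Theorems \ref{t1}--\ref{t3}.

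The hard part will be twofold. First, proving that the trichotomy is exhaustive: one must show, by the same determinant-and-contradiction technique used to kill the spurious branches in the earlier theorems, that every other configuration would force $\Lambda$ to be constant, contradicting the standing assumption. Second, the explicit integration of the system for $q$ in the general situation of Theorem \ref{t1}, where the coupling of $\omega$, $\mu$ and $\Lambda$ through the coefficients $\sqrt3 e^{\omega}-\sin(2t)\tan\Lambda$ makes verifying the compatibility conditions the most delicate computation; the degenerate cases of Theorems \ref{t2} and \ref{t3} are comparatively straightforward once the general case is in hand.
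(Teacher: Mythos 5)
First, a structural point: this paper does not actually prove the statement you were asked about. Theorems \ref{t1}, \ref{t2}, \ref{t3} and the classification statement itself are imported verbatim from \cite{eu} (``We recall that the following theorems are proven in \cite{eu}''), so there is no internal proof to compare yours against; the present paper only contributes the surrounding observations, namely that $\theta_1=\tfrac{\pi}{3}$ characterises the immersions whose first component has nowhere maximal rank, and that the cases $\theta_1=0$ and $\theta_1=\tfrac{2\pi}{3}$ reduce to this one via the constructions $\tilde f$ and $f^*$ of Theorems \ref{thm:comp1} and \ref{thm:comp2}. Your opening computation is correct and is exactly the mechanism behind that remark: from $\bigl(1+\tfrac{2}{\sqrt3}\sin(2\theta_i)\bigr)\alpha_i=\bigl(\cos(2\theta_i)+\tfrac{1}{\sqrt3}\sin(2\theta_i)\bigr)\beta_i$ one gets $\alpha_i=0$ when $\theta_i\equiv\tfrac{\pi}{3}$, and the converse is the content of \cite{eu}, Theorem~1.

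Beyond that reduction, however, what you have written is a programme rather than a proof, and the steps you yourself defer as ``the hard part'' are precisely the content of the theorem. Concretely: (i) the trichotomy into Theorems \ref{t1}, \ref{t2}, \ref{t3} is asserted, not derived --- you do not identify the invariant that separates the three branches, nor show that no further branch survives; ``the same determinant-and-contradiction technique'' is a hope, not an argument. (ii) The claim that the Frobenius integrability of the first-order system for $q$ ``is precisely'' the Gauss--Codazzi system must be verified for the specific coefficients appearing in Theorems \ref{t1}--\ref{t3}; this is where the Sinh--Gordon and Liouville equations actually arise as compatibility conditions, and none of it is checked. (iii) You quietly set $\theta_2=\tfrac{\pi}{3}+\Lambda$, $\theta_3=\tfrac{\pi}{3}-\Lambda$ with $\Lambda$ \emph{nonconstant}, invoking ``neither totally geodesic nor of constant sectional curvature'' --- but the theorem carries no such hypothesis. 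Immersions with $\theta_1=\tfrac{\pi}{3}$ and all angle functions constant do occur (for instance item (2) of Corollary \ref{cc}, where $\alpha_i=0$ forces $\theta_i=\tfrac{\pi}{3}$ for all $i$), and your argument simply discards them; they must either be absorbed into one of Theorems \ref{t1}--\ref{t3} or handled separately. So the entry point is right and consistent with the paper, but the classification itself is not established by what you have written; it remains an appeal to \cite{eu}.
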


\end{document}